\newtheorem{theorem}{Theorem}[section]
 \newtheorem{lemma}[theorem]{Lemma}
 \newtheorem{proposition}[theorem]{Proposition}
 \theoremstyle{definition}
 \newtheorem{definition}[theorem]{Definition}
 \theoremstyle{definition}
 \newtheorem{remark}[theorem]{Remark}
 \theoremstyle{remark}
 \newcommand{\p}{\partial}
 \newcommand{\N}{\mathbb{N}}
 \newcommand{\norm}[1]{\big\Vert#1\big\Vert}
 \newcommand{\abs}[1]{\left\vert#1\right\vert}
 \newcommand{\set}[1]{\left\{#1\right\}}
 \newcommand{\inner}[1]{\left(#1\right)}
 \newcommand{\com}[1]{\big[#1\big]}
 \newcommand{\reff}[1]{(\ref{#1})}
\begin{document}

\begin{frontmatter}

\title{Regularity  of  traveling free surface water waves with
  vorticity}

\author[1]{Hua Chen}
\ead{chenhua@whu.edu.cn}
\author[1]{Wei-Xi Li}
\ead{wei-xi.li@whu.edu.cn}
\author[2]{Ling-Jun Wang}
\ead{wanglingjun@wust.edu.cn}

\address[1]{School of Mathematics and Statistics, Wuhan
   University, 430072 Wuhan, China}
 \address[2]{
              School of Science, Wuhan University of Science and
   Technology, 430065 Wuhan, China}

\begin{abstract}
We prove real analyticity of all the streamlines, including the free
surface, of a gravity-  or  capillary-gravity-driven steady flow of
water over a flat bed, with a H\"{o}lder continuous
vorticity function,  provided that the propagating speed of the wave on
the free surface exceeds the horizontal fluid velocity throughout the
flow.
Furthermore, if the
vorticity possesses  some Gevrey regularity of  index $s$,  then the stream
function  admits the same Gevrey regularity throughout the fluid
domain;  in particular if the Gevrey index $s$ equals to $1$,  then we
obtain  analyticity of the stream function.  The regularity results  hold for both
periodic and solitary water waves.
\end{abstract}

\begin{keyword}
Analyticity, Gevrey regularity, wave profile, water wave, vorticity
\end{keyword}

\end{frontmatter}

\section{Introduction}

Recently, water waves with vorticity, also called rotational waves,
are investigated extensively. There have been a series of works
concerning rotational waves, including existence results for small-
and large-amplitude waves
\cite{MR2027299,MR2349872,MR2454604,MR2215274,MR2385741,MR2262949}, as
well as results on uniqueness and symmetry, analyticity of wave
profile \cite{MR2753609,MR2763714,HenryJmfm,MaImrn,MaQam}, and so
on. The present  work is mainly concerned with the analyticity or regularity results for
rotational water waves, with or without surface tension.

Assuming that the vorticity function is H\"{o}lder continuously differentiable, Constantin
and Strauss \cite{MR2027299} proved, by using methods of bifurcation theory, the existence
of global bifurcation branches consisting of periodic water waves which travel above a flat bottom with constant speed exceeding that of the water particles enclosed by the wave. The assumption that the wave speed exceeds that of the water particles is supported by field evidence \cite{MR1872073}, and means that the waves are not near breaking or stagnation. We consider such waves as well in this paper.

In the irrotational setting, a classical result due to Lewy \cite{MR0049399} showed that irrotational waves without stagnation points have real analytic profiles, by use of a generalized Schwartz reflection principle. Recently,
Constantin and Escher \cite{MR2753609} generalized this result to rotational case, and proved that, under the same assumption  on the vorticity function as in \cite{MR2027299}, namely H\"{o}lder continuity of the first derivative, each streamline,  except the free surface,  is real analytic;
if further  the vorticity function is real analytic,  then the free surface
itself is also analytic.    The arguments in \cite{MR2753609} base
on translational invariance property of the resulting operator in the
direction of wave propagation, and the celebrated result due to Kinderlehrer et al. \cite{MR531272} on regularity for elliptic
free boundary problems.  Later on, similar results as in
\cite{MR2753609} are obtained for deep-water waves  \cite{MaQam},
flows with merely bounded vorticity \cite{MaImrn},  solitary-water
waves \cite{HurImrn}, and for periodic capillary-gravity waves
\cite{MR2763714,MR2769902,HenryJmfm} where it was shown that the wave
profile is furthermore $C^\infty$-smooth if the vorticity function is
H\"{o}lder continuously differentiable.   Note that in  the aforementioned works
 the analyticity of free surface   is  established  under
the extra assumption that the vorticity function is analytic.

It is natural to expect the
analyticity of the free surface for flows  with only H\"{o}lder
continuous  vorticity.   This is what we will do in this work.   Precisely,  assuming that the
vorticity function is only H\"{o}lder continuous, we
obtain the real analyticity of all the streamlines, including the free
surface,  of the steady flow
over a flat bed in the absence of stagnation points.   As in the above works, we first use  an appropriate hodograph change
of variable that transforms the free boundary value problem (corresponding in
a frame moving at the constant wave speed to the governing equations for water
waves with vorticity) into a nonlinear boundary problem for a quasi-linear
elliptic equation in a fixed rectangular domain. Then basing on some a priori Schauder estimates
 (see for instance  \cite[Theorem 6.30]{MR1814364},  and  \cite{MR0125307} for general
nonlinear elliptic equations with nonlinear oblique boundary
conditions), we show the analyticity of streamlines by giving successively a quantitative bound for
each derivative of the streamlines in the H\"{o}lder norm.

We also study the case when the
vorticity possesses more regularity property rather than H\"{o}lder
continuity,  namely
Gevrey regularity of index $s$. Gevrey class  is an
intermediate space between the spaces of smooth functions and analytic
functions,  and the Gevrey class function of index $1$ is just the
real-analytic function;  see Subsection \ref{subsec result} below for precise definition of Gevrey
class. In this case we investigate Gevrey regularity of stream
function throughout the fluid domain.
If the vorticity is Gevrey regular, we prove that  the stream
function  admits  the same Gevrey regularity in the fluid domain, up to
the free surface;   see Theorem \ref{th2} stated in Subsection \ref{subsec result}.       To obtain this,
we firstly establish the corresponding  regularity for the height function
in a fixed rectangular domain, and then use the result of
\cite[Theorem 3.1]{clx2011} to
show that the Gevrey regularity is preserved through   partial hodograph
transformation.

We conclude this introduction by pointing out that our approach
 applies for both  periodic and  solitary waves.  For
 simplicity  we consider in this work only flows with finite depth.
 With suitable modifications, the methods may be
employed to the periodic waves on deep water with vorticity,  constructed in \cite{MR2215274,MR2788362}.

This paper is organized as follows. In Section \ref{sec2} we
formulate the rotational water-wave problem as  free boundary problem
for stream function and its equivalent reformulation in a fixed
rectangular domain,  and state our main regularity results.    Notations and
some useful inequalities are listed.  Section \ref{sec3}
is devoted to the proof of analyticity of  streamlines including the
free surface.  In Section \ref{sec4} we study the Gevrey (analytic)
regularity of stream function.  In the last section,
Section \ref{sec5}, we consider the travelling capillary-gravity water
waves, and obtain similar regularity
results for streamlines and stream function.

\section{Preliminaries and  main results}\label{sec2}

\subsection{The governing equations for rotational water waves}
Consider a steady two-dimensional flow of an incompressible inviscid fluid over a rigid flat bed $y=-d$ with $0<d<\infty$, acted upon by gravity,
and a steady wave on the free surface of the flow. By steady, we mean that the flow and the surface wave move at a constant speed
from left to right without changing their configuration. In the frame
of reference moving at the wave speed $c>0$,  let the $x$-axis
point in the direction of wave propagation, the free surface
be given in the graph form by $y= \eta(x)$ and let the liquid occupy the stationary domain
\[
  \Omega=\{(x,y)\in \mathbb{R}^2: -d<y<\eta(x)\}.
\]
Take $y = 0$ to represent the location of the undisturbed water
surface.
Let $(u(x,y),v(x,y))$ denote the velocity field, and define the \textit{stream function} $\psi(x,y)$ by $\psi(x,\eta(x))=0$ and
\begin{equation}\label{def psi}
  \psi_y=u-c,\quad \psi_x=-v.
\end{equation}
The flow is allowed to be rotational and characterized by the vorticity
$\omega=v_x-u_y$.
Consider also only waves that are not near breaking or stagnation, so that
\begin{equation}\label{no stag}
  \psi_y(x,y)\leq -\delta<0\quad {\rm in}~~  \bar{\Omega}
\end{equation} for some $\delta>0$, which  implies that the vorticity $\omega$ is globally a function of the stream function $\psi$, denoted by $\gamma(-\psi)$; see \cite{MR2027299}. The governing equations for the gravity water wave problem are formulated as
\begin{subequations}\label{EquPsi}
\begin{eqnarray}
 \triangle\psi=-\gamma(-\psi),&\quad (x,y)\in \Omega,\label{EquPsi1}\\
 \abs{\nabla \psi}^2+2g(y+d)=Q, &\quad y=\eta(x),\label{EquPsi2}\\
  \psi=0,&\quad y=\eta(x), \label{EquPsi3}\\
  \psi=-p_0,& \quad y=-d.  \label{EquPsi4}
 \end{eqnarray}
\end{subequations}
Here $g>0$ is the gravitational constant of acceleration, $Q$ is  a constant related to the energy and
\begin{eqnarray*}
  p_0=\int_{-d}^{\eta(x)}\psi_y(x,y)~dy<0
\end{eqnarray*}
is the relative mass flux (independent of $x$). Moreover the \textit{wave profile} $\eta(x)$ represents an unknown in the problem since it is a free surface. We refer to \cite{MR2027299} for the detailed derivation of the above system of governing equations.

The level sets $\{(x,y):\psi(x,y)={\rm constant}\}$ are \textit{streamlines} of the fluid motion. Note that the free surface and the rigid bottom
are themselves streamlines in virtue of \reff{EquPsi3} and
\reff{EquPsi4}. Observing \reff{no stag}, each streamline
$\psi(x,y)=p$, with $p\in[p_0,0]$, can be described by the graph of
some function $y=\sigma_p(x)$.

\subsection{Statement of the main results}\label{subsec result}

To state our main results, we first recall the definition of Gevrey
class functions, which is an intermediate space between the spaces of
smooth functions and real-analytic functions;  see \cite[Chapter 1]{MR1249275}
for more detail.

\begin{definition}\label{def gevrey}
 Let $W$ be an open subset of $\mathbb{R}^d$ and $f$
be a real-valued  function defined on the closure $\bar W$ of $W$.  We say $f$ belongs to  Gevrey class
in $\bar {W}$ of index $s\geq 1$,  denoted by $f\in G^s(\bar W)$,   if $f\in
C^\infty(\bar W)$  and for any compact subset $K$ of  $ \bar W$  there
exists a constant $C_K$, depending only on $K$,
such that  
\[
  \forall~\alpha\in\mathbb N^d,\quad
  \max_{x\in K}\abs{\partial^\alpha f(x)}\leq C_K^{\abs\alpha+1}\inner{\abs{\alpha}!}^s,
\]
where $\abs\alpha=\alpha_1+\alpha_2+\cdots+\alpha_d.$
\end{definition}

In particular   $G^1(\bar W)$ is the space  of all real analytic functions
in $\bar W$.

Throughout the paper let  $C^{k,\mu}(\bar W)$,
$k\in\mathbb N, \mu\in(0,1)$,   be
the  standard H\"{o}lder  space of functions $f : \bar W \rightarrow \mathbb R$ with H\"{o}lder-continuous derivatives
of exponent $\mu$ up to order $k$.      For given $p_0<0$ and
$\gamma\in C^{1,\mu}([p_0,0])$,  the existence of periodic and
supercritical small-amplitude solitary water waves has been
established in \cite{MR2027299} and \cite{MR2454604,MR2385741},
respectively. Our main result below shows that, with a H\"{o}lder continuous
vorticity,
each streamline can be described by the graph of some analytic function.

\begin{theorem}\label{th1}
Let the function   $\gamma$ in  \reff{EquPsi1}   belong to the H\"{o}lder space  $C^{0,\mu}([p_0, 0])$  with  $p_0<0$ and $0<\mu<1$  given,  and  let $\psi(x,y)\in
C^{3,\mu}(\bar\Omega)$  be the stream function for the boundary problem
\reff{EquPsi1}-\reff{EquPsi4} with free surface $y=\eta(x)$.  Suppose 
$\psi$ satisfies the no-stagnation assumption \reff{no stag}. Then each
streamline including the free surface $y=\eta(x)$ is a real-analytic curve.
\end{theorem}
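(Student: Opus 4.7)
The plan is first to flatten the free surface by a partial hodograph change of variables, and then to establish real analyticity of the transformed unknown in the horizontal direction by a quantitative Schauder iteration. Introduce the new coordinates $(q,p)=(x,-\psi(x,y))$. By the no-stagnation condition \reff{no stag} this is a $C^{3,\mu}$-diffeomorphism from $\bar\Omega$ onto the fixed strip $\bar R=\{(q,p):p_0\le p\le 0\}$, carrying each streamline $\psi=-p_*$ to the horizontal line $\{p=p_*\}$. The height function $h(q,p)$ defined by $y=h(q,p)-d$ satisfies a quasi-linear elliptic equation
\[
 (1+h_q^2)h_{pp}-2h_qh_ph_{qp}+h_p^2h_{qq}+\gamma(p)h_p^3=0\quad\text{in }R,
\]
together with the Dirichlet condition $h=0$ on $\{p=p_0\}$ and the nonlinear Bernoulli-type condition $1+h_q^2+(2gh-Q)h_p^2=0$ on $\{p=0\}$; the assumption \reff{no stag} becomes $h_p\ge\delta'>0$ on $\bar R$, which guarantees uniform ellipticity. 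Since $q=x$, the streamline at level $p_*$ is the graph $q\mapsto h(q,p_*)-d$, and the theorem reduces to showing that, for every $p\in[p_0,0]$, $h(\cdot,p)$ is real-analytic in $q$.

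The key structural observation is that the whole boundary-value problem is translation-invariant in $q$, so that $v_k:=\partial_q^k h$ is a solution of a \emph{linear} problem
\[
 Lv_k=F_k\text{ in }R,\qquad v_k=0\text{ on }\{p=p_0\},\qquad Bv_k=G_k\text{ on }\{p=0\},
\]
where the leading coefficients of the linear elliptic operator $L$ and of the oblique operator $B$ depend only on $h,h_q,h_p$, and are identical for every $k$. Obliqueness of $B$ is guaranteed by the Bernoulli relation, which forces $(2gh-Q)h_p<0$ on $\{p=0\}$. The right-hand sides $F_k,G_k$ are polynomials in $\partial_q^j h$, $j<k$, with combinatorial weights from Leibniz's rule. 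Applying the Schauder estimate for oblique derivative problems (\cite[Theorem 6.30]{MR1814364}, \cite{MR0125307}) with a constant $C_0$ independent of $k$, and iterating in $k$, yields as a first step the qualitative regularity $\partial_q^k h\in C^{2,\mu}(\bar R)$ for every $k\in\mathbb N$.

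The heart of the proof is to upgrade this smoothness to the factorial bound
\[
 M_k:=\norm{\partial_q^k h}_{C^{2,\mu}(\bar R)}\le C^{k+1}k!,\qquad k\ge 0,
\]
which is precisely the real-analyticity of $h$ in $q$, uniformly in $p$. The Schauder inequality, together with the explicit form of $F_k$ and $G_k$, produces a recursion of the schematic type
\[
 M_k\le C_0\Bigl(M_{k-1}+\sum \binom{k}{j_1,\dots,j_r}M_{j_1}\cdots M_{j_r}\Bigr),
\]
with $r\le 3$, $j_1+\cdots+j_r\le k+1$ and each $j_i<k$. A standard convolution-type combinatorial bound then closes the induction once $C$ is chosen large enough in terms of $C_0$, of $\norm{\gamma}_{C^{0,\mu}}$, and of the seed bounds $M_0,M_1$ given by $h\in C^{3,\mu}(\bar R)$. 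Analyticity of each streamline $q\mapsto h(q,p)-d$, and in particular of the free surface $\eta(x)=h(x,0)-d$, follows.

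The principal obstacle is the nonlinear Bernoulli condition at $p=0$: differentiating it $k$ times generates boundary data $G_k$ involving products of up to three factors of $\partial_q^j h$ and $\partial_q^j h_p$, whose $C^{1,\mu}$ norms on $\{p=0\}$ have to be bounded by the $M_j$ through tangential information only (since the mere H\"older continuity of $\gamma$ precludes any gain from differentiation in $p$). Ensuring that the Schauder constant for the associated linear oblique problem stays $k$-independent, and that the trilinear combinatorics in $G_k$ remain compatible with factorial growth, is the delicate quantitative step that distinguishes the present argument from earlier works in which either the free surface was excluded or $\gamma$ was assumed real-analytic.
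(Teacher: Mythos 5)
Your proposal is correct and follows essentially the same route as the paper: partial hodograph transformation to the fixed strip, exploiting translation invariance in $q$, differentiating the quasilinear elliptic system $m$ times in $q$ to obtain a linear oblique-derivative problem for $\partial_q^m h$ with $m$-independent coefficients, applying the Schauder estimate with a constant independent of $m$, and closing the induction by factorial-type estimates on the Leibniz-expanded right-hand sides (the paper formalizes the ``convolution-type combinatorial bound'' as Lemma~\ref{stab}). The only imprecision is your remark attributing the delicacy to $\gamma$'s role in the boundary data $G_k$: in fact $\gamma$ enters only the interior term $f_2=-\gamma(p)\,\partial_q^m h_p^3$ and is never differentiated (it depends only on $p$), which is precisely why H\"older continuity of $\gamma$ suffices for $q$-analyticity.
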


\begin{remark}
The existence of the stream function $\psi$ for the boundary problem
\reff{EquPsi1}-\reff{EquPsi4}  is well-known (cf. \cite{MR2027299}).
\end{remark}

The following result shows that the stream function admits the same
regularity as the vorticity.

\begin{theorem}\label{th2}
Under the same assumptions as in Theorem \ref{th1},  if  $\gamma \in
G^s([p_0, 0])$ additionally with  $s\geq1$ given,  then we have $\psi(x,y) \in
G^s(\bar \Omega )$;  in particular if $s=1$, i.e., $\gamma$ is analytic in $[p_0,
0]$, then the stream function $\psi(x,y)$ is analytic in $\bar\Omega$.
\end{theorem}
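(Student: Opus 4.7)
The plan is to carry out the same partial hodograph reduction used in the proof of Theorem \ref{th1} and then upgrade the H\"{o}lder--Schauder iteration to a quantitative estimate on the Gevrey scale. Introducing new independent variables $(q,p)=(x,-\psi(x,y))$, condition \reff{no stag} makes $(x,y)\mapsto(q,p)$ a diffeomorphism from $\bar\Omega$ onto the fixed rectangular strip $\bar R=\mathbb{R}\times[p_0,0]$ (restricted to one period in the periodic case). Writing the inverse as $y=h(q,p)-d$, the system \reff{EquPsi1}--\reff{EquPsi4} becomes a quasilinear elliptic equation for $h$,
\begin{equation*}
(1+h_q^2)h_{pp}-2h_qh_ph_{qp}+h_p^2h_{qq}-\gamma(p)h_p^3=0\quad\mbox{in }R,
\end{equation*}
with the Dirichlet condition $h=0$ at $p=p_0$ and the nonlinear oblique Bernoulli condition
\begin{equation*}
1+h_q^2+(2gh-Q)h_p^2=0\quad\mbox{at }p=0,
\end{equation*}
and $h_p>0$ on $\bar R$ by \reff{no stag}. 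The proof of Theorem \ref{th1} already produces real-analytic streamlines and hence $h\in C^\infty(\bar R)$, so the remaining game is entirely quantitative.

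The core task is to show $h\in G^s(\bar R)$. I would argue by induction on the total order $j+k$ of mixed derivatives $\partial_q^j\partial_p^k h$, proving, on any compact $\mathcal K\subset\bar R$, an estimate of the form
\begin{equation*}
\norm{\partial_q^j\partial_p^k h}_{C^{2,\mu}(\mathcal K)}\le M^{j+k+1}\bigl((j+k)!\bigr)^s,
\end{equation*}
where $M$ depends only on $\mathcal K$ and on the Gevrey constant of $\gamma$. The tangential derivatives $\partial_q^j h$ are treated first: since $q$-translations preserve the equation and both boundary conditions, differentiating $j$ times in $q$ yields a linear oblique boundary value problem for $\partial_q^j h$ whose inhomogeneity is a Leibniz expansion in already-estimated quantities $\partial_q^i h$ ($i<j$) together with derivatives $\gamma^{(m)}(p)$ satisfying $\abs{\gamma^{(m)}(p)}\le C_\gamma^{m+1}(m!)^s$. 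The nonlinear Schauder estimate invoked in the proof of Theorem \ref{th1} then closes the induction on $j$ once the arising factorial convolutions are bounded by $(j!)^s$. Normal derivatives are recovered from the equation itself: using
\begin{equation*}
h_{pp}=\frac{1}{1+h_q^2}\bigl(2h_qh_ph_{qp}-h_p^2h_{qq}+\gamma(p)h_p^3\bigr),
\end{equation*}
one trades each extra $\partial_p$ for tangential derivatives of $h$ and factors $\gamma^{(m)}$, and the corresponding Gevrey bounds combine to yield the required $((j+k)!)^s$ growth.

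Transferring Gevrey regularity from $h$ back to $\psi$ is then achieved by \cite[Theorem 3.1]{clx2011}: since the partial hodograph map $(q,p)\mapsto(q,h(q,p)-d)$ is $G^s$ and non-degenerate thanks to $h_p>0$ on $\bar R$, its inverse belongs to $G^s$, and $\psi(x,y)=-p(x,y)$ inherits the $G^s$ regularity on $\bar\Omega$; the case $s=1$ specialises to real analyticity of $\psi$. The main obstacle will be the quantitative bookkeeping of constants across the Schauder step, the Leibniz expansions and the trade of normal for tangential derivatives, so that the factor $(|\alpha|!)^s$ is preserved without loss. In particular, the cubic nonlinearity $\gamma(p)h_p^3$ and the nonlinear oblique boundary condition must be expanded in a way that is compatible with the Gevrey-$s$ scale, which is the combinatorial heart of the argument.
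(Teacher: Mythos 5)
Your overall architecture matches the paper's: reduce to the height function $h$ via the hodograph change of variables, bound all $\partial_q$-derivatives by the Schauder iteration from Theorem \ref{th1}, recover normal derivatives from the elliptic equation by solving for $h_{pp}$, and transfer $G^s$-regularity of $h$ back to $\psi$ via \cite[Theorem 3.1]{clx2011}. But there is a genuine gap in the quantitative scheme: the single-constant induction hypothesis $\norm{\partial_q^j\partial_p^k h}_{C^{2,\mu}} \le M^{j+k+1}\bigl((j+k)!\bigr)^s$ cannot close. When you apply $\partial_q^j\partial_p^{k-2}$ to the equation and solve for $\partial_q^j\partial_p^k h$, the Leibniz expansion of, say, $\partial_q^j\partial_p^{k-2}(h_p^2 h_{qq})$ contains the term $h_p^2\,\partial_q^{j+2}\partial_p^{k-2}h$. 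This has the \emph{same} total order $j+k$ as the target but two fewer $p$-derivatives, so the inner induction hypothesis gives $\norm{h_p^2\,\partial_q^{j+2}\partial_p^{k-2}h}_0 \le \norm{h_p}_\infty^2\, M^{j+k+1}\bigl((j+k)!\bigr)^s$, which is the target multiplied by a constant that is generically $>1$. There is no mechanism in your scheme to absorb such factors, and they accumulate across the many terms (e.g.\ $\norm{h_q}_\infty$, the Gevrey constant of $\gamma$, the constants from the algebra inequality \reff{algebra}).

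The paper's Proposition \ref{prophpq} resolves this by making the estimate \emph{anisotropic}: it uses two constants $L_2\geq L_1\geq 1$ and proves $\norm{\partial_q^{m-n}\partial_p^n h}_2 \le L_1^{m-n-1}L_2^n\,[(m-2)!]^s$ by double induction, first on the total order $m$ and then on the number $n$ of $p$-derivatives. Each time a $p$-derivative is traded for a $q$-derivative through the equation, the exponent on $L_2$ drops and the exponent on $L_1$ rises, so the resulting term is suppressed by a factor $L_1/L_2$; choosing $L_2/L_1$ large enough (see \reff{l1l23}, \reff{l1l24}, \reff{Lgeq}) absorbs all the multiplicative constants, including those from Lemma \ref{+stab} and from the Gevrey bound \reff{GevCon} on $\gamma$. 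This asymmetry between tangential and normal derivatives is not a cosmetic detail but the mechanism that makes the induction close; your single-constant hypothesis is equivalent to it only \emph{a posteriori}, not as an induction hypothesis. A secondary inaccuracy in your write-up: $\gamma$ depends only on $p$, so the factors $\gamma^{(m)}(p)$ do \emph{not} appear when you differentiate the equation in $q$ alone (which is why the tangential step, Proposition \ref{propassu}, gives plain analyticity in $q$ irrespective of how rough $\gamma$ is in $p$); they appear only in the normal step, where the Gevrey constant $\tilde M$ of $\gamma$ enters through the constraint $L_2\geq \tilde M L_1$.
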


\begin{remark}
The above results  also hold for the travelling capillary-gravity
water waves; see Theorem \ref{th3} in Section \ref{sec5}.
\end{remark}

\subsection{Reformulation}

Under the no-stagnation assumption \reff{no stag}, we  can use the
partial hodograph change of variables  to transform  the free boundary problem \reff{EquPsi1}-\reff{EquPsi4} into a problem with fixed boundary.
Precisely,  if we  introduce the new variable $(q,p)$ with
  \[
   q=x, \quad p=-\psi(x,y),
\]
and exchange  the roles of the $y$-coordinate and
$\psi$ by setting
\begin{eqnarray*}
  h(q,p)=y+d,
\end{eqnarray*}
then the fluid domain $\Omega$ is transformed into a fixed infinite strip
\begin{eqnarray*}
   R=\{(q,p): q\in\mathbb R,\ p_0<p<0\},
\end{eqnarray*}
and the system \reff{EquPsi1}-\reff{EquPsi4} can be reformulated  in this strip as
\begin{subequations}
\begin{eqnarray}
 (1+h_q^2)h_{pp}-2h_ph_q h_{pq}+h_p^2h_{qq}+\gamma(p)h_p^3=0,& {\rm in~~} R,\label{Equh1}\\
  1+h_q^2+(2gh-Q)h_p^2=0,&\quad {\rm on~~} p=0,\label{Equh2}\\
  h=0,&\quad {\rm on~~} p=p_0.\label{Equh3}
\end{eqnarray}
\end{subequations}
We refer to \cite{MR2027299}  for the equivalence of the two  systems
\reff{EquPsi1}-\reff{EquPsi4} and \reff{Equh1}-\reff{Equh3} of governing equations.
Note that $h_p=\frac{1}{c-u}$. The no-stagnation assumption \reff{no stag} ensures that
\begin{eqnarray}\label{hp bounded}
 0<\inf_{(q,p)\in\bar R}h_p\leq h_p\leq \sup_{(q,p)\in\bar R}h_p\leq\frac{1}{\delta}.
\end{eqnarray}

 The following proposition shows that the regularity is preserved
through hodograph transformation.  So we only need to study  the
above  problem \reff{Equh1}-\reff{Equh3} instead of
the original one \reff{EquPsi1}-\reff{EquPsi4}.

\begin{proposition}\label{reserv}
      Let $h\in C^{2,\mu}(\bar R)$ be  a solution to the
      problem \reff{Equh1}-\reff{Equh3} .  If the mapping  $q\mapsto
h(q,p)$,  with any fixed $p\in[p_0,0]$,  is  analytic in  $\mathbb R$,
then each  streamline including the free surface is an analytic
curve.   Moreover if  $h\in G^s(\bar R)$ then  the
      stream function $\psi$ for \reff{EquPsi1}-\reff{EquPsi4}  lies in
      $G^s(\bar\Omega)$; in particular $\psi$ is analytic in
      $\bar\Omega$ provided $h$ is analytic in $\bar R$.
\end{proposition}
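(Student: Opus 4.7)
The plan is to exploit the fact that the inverse hodograph map has an explicit form in terms of $h$, so both the streamlines and the stream function can be read off directly.

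First, for a fixed streamline value $p\in[p_0,0]$, the change of variables $p=-\psi(x,y)$ combined with $h(q,p)=y+d$ identifies the streamline $\{\psi=-p\}$ with the graph
\[
y=h(x,p)-d,\quad x\in\mathbb R.
\]
In particular the free surface corresponds to $p=0$ (since $\psi=0$ there and hence $h(x,0)=\eta(x)+d$) and the flat bed to $p=p_0$. Thus analyticity of the map $q\mapsto h(q,p)$ for every fixed $p$ immediately implies that each streamline, including the free surface, is an analytic curve. This disposes of the first assertion without additional work.

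For the second assertion, I would view the hodograph map $\Phi:(x,y)\mapsto(x,-\psi(x,y))$ as a global $C^{2,\mu}$-diffeomorphism from $\bar\Omega$ onto $\bar R$, with explicit inverse $\Phi^{-1}(q,p)=(q,h(q,p)-d)$. The no-stagnation bound \reff{hp bounded} gives $h_p$ bounded strictly away from zero on $\bar R$, so one can recover $\psi$ by writing $\psi(x,y)=-p(x,y)$ where $p=p(x,y)$ is defined implicitly by $h(x,p(x,y))=y+d$. Once it is established that the Gevrey-$s$ regularity of $h$ transfers to $p$, we conclude $\psi\in G^s(\bar\Omega)$, and the analytic case $s=1$ follows as the special case.

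The main obstacle is precisely the preservation of the Gevrey index $s$ through the nonlinear inversion $\Phi^{-1}$: one needs to control iterated derivatives of the implicitly defined $p(x,y)$ by bounds of the form $C_K^{\abs\alpha+1}(\abs\alpha!)^s$, which requires a careful Fa\`a~di~Bruno / majorant-type bookkeeping to make sure the factor $(\abs\alpha!)^s$, not $(\abs\alpha!)^{s+1}$ or worse, emerges after composition. I would invoke \cite[Theorem 3.1]{clx2011} directly for this, as that result is tailored to exactly this kind of partial hodograph inversion within the Gevrey scale, its applicability being ensured by the non-degeneracy $h_p\geq\delta>0$. For the analytic case $s=1$ one could alternatively appeal to the classical analytic implicit function theorem via the Cauchy method of majorants.
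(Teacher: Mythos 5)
Your argument for the first assertion is identical to the paper's: under \reff{no stag} the streamline corresponding to $p\in[p_0,0]$ is the graph $y=h(x,p)-d$, so analyticity of $q\mapsto h(q,p)$ for each fixed $p$ yields analyticity of every streamline, including the free surface at $p=0$. For the second assertion you and the paper both reduce to \cite[Theorem 3.1]{clx2011}, so the route is essentially the same; the one place where the paper does something you skip is the intermediate reduction. The paper introduces $\tilde\psi(x,y):=-\int_0^y\psi(x,z)\,dz$, so that $p=\tilde\psi_y$ and the hodograph change of variables $(x,y)\mapsto(x,\tilde\psi_y)$ becomes a genuine partial Legendre transform of $\tilde\psi$ --- which is precisely the transformation treated in \cite[Theorem 3.1]{clx2011}; that theorem then gives $\tilde\psi\in G^s(\bar\Omega)$, and $\psi=-\tilde\psi_y\in G^s(\bar\Omega)$ because $G^s$ is closed under differentiation. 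You instead frame the step as a direct implicit-function inversion of $h(x,p(x,y))=y+d$ and invoke the same theorem for it. That is morally the same, but \cite[Theorem 3.1]{clx2011} is stated for partial Legendre transforms rather than for general implicit inversion, so to make the citation bite you should either make the $\tilde\psi$-reduction explicit (as the paper does) or appeal instead to a Gevrey implicit function theorem (your remark about the Cauchy majorant method covers the analytic case $s=1$, but the general Gevrey-$s$ case needs a separate reference). This is a minor, fixable gap; the overall strategy matches the paper's.
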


\begin{proof}
  The first statement is straightforward.  Indeed, Observing \reff{no stag}, each streamline
$\psi(x,y)=p$, with fixed $p\in[p_0,0]$, can be described by the graph of
some function $y=\sigma_p(x)$.  The analyticity of  $x\mapsto\sigma_p(x)$
follows at once from the analyticity of  the mapping  $q\mapsto
h(q,p)$,  due to  the partial hodograph change of variables. 

 As for the second one,  we  rewrite the hodograph transform as
  \[
     q=x, ~~p=\tilde \psi_y
\]
with $\tilde\psi(x,y)\stackrel{\rm def}{=}-\int_0^y\psi(x,z)dz$. This
is just the  classic partial Legendre transformation.  If $h(q, p)\in
G^s(\bar R)$  then $y=y(q, p)\in
G^s(\bar R)$. Thus
 by
\cite[Theorem 3.1]{clx2011},  we have   $\tilde \psi\in
G^s(\bar\Omega)$ and thus $\psi\in
G^s(\bar\Omega)$ since $G^s(\bar\Omega)$ is closed under
differentiation.
\end{proof}

\subsection{Notations and some useful inequalities}
We list some notations and useful inequalities which will be used
throughout the paper.   Let $k\in\N$ and $\mu\in(0,1)$,  and
let
$\inner{C^{k,\mu}(\bar R);
  \norm{\cdot}_{k,\mu;\bar R}}$ be  the standard H\"{o}lder space equipped
with the norm
\begin{eqnarray*}
  \norm{w}_{k,\mu;\bar R}=\sum_{\abs{\alpha}=0}^{k}\sup_{\bar R}\abs{\p^{\alpha}w(q,p)}+\sup_{\abs{\alpha}=k}\sup_{\stackrel{(q,p)\neq(\tilde q,\tilde p)}{ \bar R}}\frac{\abs{\p^{\alpha}w(q,p)-\p^\alpha w(\tilde q,\tilde p)}}{\abs{(q,p)-(\tilde q,\tilde p)}^\mu}.
\end{eqnarray*}
To simplify the notation we will use the notation
$\norm{\cdot}_{k,\mu}$ instead of $\norm{\cdot}_{k,\mu;\bar R}$
if no confusion occurs.  For the case when $\mu=0$, we naturally  define
\[
  \norm{w}_{k}=\norm{w}_{k,0}=\sum_{\abs{\alpha}=0}^{k}\sup_{\bar R}\abs{\p^{\alpha}w}.
\]
For $\mu\in(0,1)$, direct verification shows that
\begin{eqnarray}\label{algebra}
  \norm{uw}_{0,\mu}\leq \norm{u}_{0,\mu}\norm{w}_{0,\mu},\quad \norm{uw}_{1,\mu}\leq 2\norm{u}_{1,\mu}\norm{w}_{1,\mu}.
\end{eqnarray}

For a multi-index $\alpha=(\alpha_1,\alpha_2)\in\N^2$, we denote
$\p^\alpha=\p_q^{\alpha_1}\p_p^{\alpha_2}$,~  $\alpha!=\alpha_1!\alpha_2!$ and denote the length of $\alpha$ by $\abs{\alpha}=\alpha_1+\alpha_2$. Moreover for two multi-indices $\alpha$ and $\beta=(\beta_1,\beta_2)\in\N^2$, by
$\beta\leq \alpha $ we mean $\beta_i\leq \alpha_i$ for each $1\leq
i\leq 2$.  Let ${\alpha\choose\beta}$ be the binomial coefficient, i.e.,
\[
  {\alpha\choose\beta}=\frac{\alpha!}{\beta!(\alpha-\beta)!}=\frac{\alpha_1!\alpha_2!}{\beta_1!(\alpha_1-\beta_1)!\beta_2!(\alpha_2-\beta_2)!}.
\]
In the sequel, we use the convention that  $m!=1$ if $m\leq 0$.

\section{Analyticity of streamlines}\label{sec analyticity}\label{sec3}

We prove in this section  the analyticity of streamlines, including the free
surface $y=\eta(x)$.    In view of  Proposition \ref{reserv},  it suffices to show the following
conclusion  that the  map $q\mapsto h(q,p)$ is
analytic for all $p\in[p_0,0]$.

\begin{proposition}\label{propassu}
   Let $\gamma\in C^{0,\mu}\inner{[p_0, 0]}$ with $p_0<0$ and $0<\mu<1$
   given,  and $h\in C^{2,\mu}(\bar
   R)$ be a solution of the governing equations
   \reff{Equh1}-\reff{Equh3}.
Then there exists a constant $L\geq 1$,  such that for all $m\in\N$
with $m\geq 2$, one has the following estimate
  \begin{equation}\label{Em}
   (E_m):\quad\quad \norm{\p^m_q h}_{2,\mu}\leq L^{m-1}(m-2)!.
  \end{equation}
Thus
 the map $q\mapsto h(q,p)$ is analytic for all $p\in[p_0,0]$.
\end{proposition}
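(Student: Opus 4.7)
The plan is to prove the bounds $(E_m)$ by induction on $m\ge 2$, exploiting the $q$-translation invariance of the system \reff{Equh1}-\reff{Equh3} (the vorticity $\gamma$ depends only on $p$). Once $(E_m)$ is established, analyticity of $q\mapsto h(q,p)$ for each fixed $p\in[p_0,0]$ follows from the Taylor estimate $\abs{\p_q^m h(q,p)}/m!\le L^{m-1}(m-2)!/m!=O(L^m/m^2)$, which yields a uniform positive radius of convergence. The base case $m=2$ is settled by applying the Schauder theory for oblique boundary value problems to the linear equation satisfied by $\p_q^2 h$: its coefficients lie in $C^{0,\mu}(\bar R)$ because $h\in C^{2,\mu}(\bar R)$, the source is a polynomial in $h$ and its first two derivatives, and $L$ is then chosen large enough.

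For the inductive step, set $v=\p_q^{m}h$ and apply $\p_q^{m}$ to \reff{Equh1}-\reff{Equh3} via the Leibniz formula. Isolating the terms in which all $m$ derivatives fall on a single factor (these are linear in $v$ and its derivatives of order $\le 2$), one sees that $v$ solves the linear oblique boundary value problem
\begin{gather*}
 (1+h_q^{2})v_{pp}-2h_ph_q v_{pq}+h_p^{2}v_{qq}+A_{1}v_p+A_{2}v_q= F_m\quad\text{in } R,\\
 2h_q v_q+2(2gh-Q)h_p v_p+2gh_p^{2}v= G_m\quad\text{on } p=0,\\
 v=0\quad\text{on } p=p_0,
\end{gather*}
with coefficients $A_{1},A_{2}\in C^{0,\mu}(\bar R)$ that depend polynomially on $h$, its first two derivatives, and $\gamma(p)$. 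The sources $F_m$ and $G_m$ are finite Leibniz sums of products $\p_q^{k_{1}}h_{\alpha}\cdot \p_q^{k_{2}}h_{\beta}(\cdot\,\p_q^{k_{3}}h_{\zeta})$, with $\alpha,\beta,\zeta$ first- or second-order partials, $k_{1}+k_{2}+k_{3}=m$, and no $k_i$ equal to $m$; since $\gamma$ is independent of $q$, it enters only as a bounded $C^{0,\mu}$-multiplier. Uniform ellipticity from \reff{hp bounded} and obliqueness of the boundary operator (its coefficient of $v_p$ is $2(2gh-Q)h_p$, uniformly nonzero by \reff{EquPsi2}) justify the Schauder estimate (cf.\ \cite[Theorem 6.30]{MR1814364})
\begin{equation*}
 \norm{v}_{2,\mu}\le C\bigl(\norm{F_m}_{0,\mu}+\norm{G_m}_{1,\mu}+\norm{v}_{0}\bigr),
\end{equation*}
with $C$ depending only on $\norm{h}_{2,\mu}$, $\delta$, $\mu$, and $\gamma$.

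Using \reff{algebra} together with the induction hypothesis $\norm{\p_q^k h}_{2,\mu}\le L^{k-1}(k-2)!$ for $2\le k<m$ and fixed base bounds for $k=0,1$, each triple product in $F_m$ is controlled by $L^{m-3}(k_{1}-2)!(k_{2}-2)!(k_{3}-2)!$ (using the convention $(k-2)!=1$ for $k\le 2$). Summing against the multinomial coefficients reduces the estimate to the combinatorial inequality
\begin{equation*}
 \sum_{\substack{k_{1}+k_{2}+k_{3}=m\\ k_i<m}} \binom{m}{k_{1},k_{2},k_{3}}(k_{1}-2)!(k_{2}-2)!(k_{3}-2)!\le C'(m-2)!,
\end{equation*}
which follows from the convergence of $\sum_{k\ge 2} 1/(k(k-1))$ after separating the cases $k_i\in\{0,1\}$. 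A parallel bound controls $G_m$ in $C^{1,\mu}$, and $\norm{v}_{0}\le \norm{\p_q^{m-1}h}_{2,\mu}\le L^{m-2}(m-3)!$; choosing $L$ large enough to absorb $C,C'$ and the base-case constants closes the induction. The main obstacle is precisely this combinatorial book-keeping, which must reconcile the multinomial coefficients, the factorial growth $(k-2)!$ dictated by the induction hypothesis, and the index shifts arising from differentiating first- or second-order derivatives of $h$. A secondary subtlety is the asymmetric regularity requirement on the sources -- $C^{0,\mu}$ in the interior versus $C^{1,\mu}$ on the boundary -- which forces us to apply \reff{algebra} at the $1,\mu$-level for each product appearing in $G_m$.
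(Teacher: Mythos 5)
Your proposal is correct and follows essentially the same route as the paper: differentiate the system $m$ times in $q$, treat the resulting linear uniformly oblique boundary value problem for $\p_q^m h$ via the Schauder estimate, and close the induction through Leibniz/multinomial bookkeeping yielding $(m-2)!$-growth. The only cosmetic difference is that you keep the ``all $m$ derivatives on one factor'' first-order contributions as lower-order coefficients $A_1 v_p + A_2 v_q$ on the left-hand side, whereas the paper absorbs those interior terms into the source $f_1$ and packages the combinatorial estimate through an auxiliary product lemma (Lemma \ref{stab}) and a three-case split on $n$ instead of a single multinomial inequality; the two formulations are equivalent.
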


\begin{remark}
 As to be seen in the proof below, the constant $L$ depends  on $\mu,
\inf_{\bar R}h_p$,  $ \norm{h}_{2,\mu}$, $\norm{\gamma}_{0,\mu}$ and the number $\delta$ given in \reff{hp
bounded}, but
 independent of the order $m$ of derivative.
\end{remark}

\begin{remark}\label{remReg}
Starting from the $C^{2,\mu}$-regularity  solution $h$ of the governing equations
   \reff{Equh1}-\reff{Equh3},  we use the  Schauder estimate (
  cf. \cite[Theorem 6.30]{MR1814364}) for  $\p_q h$ which satisfies  a
  nonlinear elliptic equation of the same type as \reff{Equh1}-\reff{Equh3}, to conclude that  $\p_q h\in
  C^{2,\mu}(\bar R)$.   Repeating the procedure, we can derive by
  standard  iteration that $\p_q^k h\in
  C^{2,\mu}(\bar R)$  for any $k\in\N$;
  see for instance \cite{MR2027299,HenryJmfm}.
\end{remark}

To confirm the last statement in the above Proposition \ref{propassu},
we   choose $C$ in such a way
that
\[
     C=\max\set{L,  \norm{h}_{1,\mu}},
\]
which,  along with the estimate $(E_m)$ with $m\geq 2$   in Proposition \ref{propassu},  yields
\begin{eqnarray*}
\forall~m\in\mathbb N, \quad \max_{(q,p)\in
  \bar R}|\partial_q^m{h}(q,p)| \leq C^{m+1}m! .
\end{eqnarray*}
In particular, for any  $p\in[p_0,0]$,  $\max_{q\in\mathbb R} \abs{\p_q^m h(q,p)}\leq C^{m+1}
m!$.  This gives the real analyticity of the map $q\mapsto h(q,p)$,
$p\in[p_0,0]$.  

Before proving the above proposition, we first give the following
technical  lemma, and present its proof  at the end of this section.

\begin{lemma}\label{stab}

Let $\ell=1$ or $2$ be given, and let $\norm{\cdot}$ stand for some
H\"{o}lder norm $\norm{\cdot}_{0,\mu}$ or  $\norm{\cdot}_{1,\mu}$. Suppose that $k_0$ is an integer
with $k_0\geq\ell+1$,  and $\p_q^k u_j\in
C^{0,\mu}(\bar R)$ for all $k\leq k_0$,  $j=1,2,3$.  If there
exists  a constant $H\geq 1$ such that
\begin{equation}\label{condition3}
 \forall~\ell+1\leq k\leq k_0,  \quad  \|\p_q^k u_j\|
  \leq H^{k-\ell}(k-\ell-1)!, \quad j=1,2,3,
\end{equation}
then we can find a constant $C_*$ depending only on $\ell$ such that
\begin{eqnarray*}
 \forall~\ell+1\leq k\leq k_0,  \quad
 \big\|\p_q^k\inner{u_1u_2u_3}\big\|\leq C_* \Big(\sum_{j=1}^3\norm{u_j}_{\ell+1,\mu}+1\Big)^6 H^{k-\ell}(k-\ell-1)!.
\end{eqnarray*}

\end{lemma}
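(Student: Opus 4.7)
The plan is to expand $\p_q^k(u_1u_2u_3)$ via the Leibniz formula
\[
\p_q^k(u_1u_2u_3)=\sum_{j_1+j_2+j_3=k}\frac{k!}{j_1!j_2!j_3!}(\p_q^{j_1}u_1)(\p_q^{j_2}u_2)(\p_q^{j_3}u_3),
\]
take the H\"older norm $\|\cdot\|$ on both sides, and use the algebra property \reff{algebra}, which is submultiplicative up to an absolute constant (namely $1$ for $\|\cdot\|_{0,\mu}$ and at most $4$ for $\|\cdot\|_{1,\mu}$). Set $M:=\sum_{j=1}^{3}\|u_j\|_{\ell+1,\mu}+1\geq 1$. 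The task then reduces to bounding
\[
\sum_{j_1+j_2+j_3=k}\frac{k!}{j_1!j_2!j_3!}\|\p_q^{j_1}u_1\|\,\|\p_q^{j_2}u_2\|\,\|\p_q^{j_3}u_3\|
\]
by a constant (depending only on $\ell$) times $M^{6}H^{k-\ell}(k-\ell-1)!$.

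The key is to use two different bounds on each factor $\|\p_q^{j_i}u_i\|$ according to the size of $j_i$. When $j_i\leq \ell$, the definition of the H\"older norm $\|\cdot\|_{\ell+1,\mu}$ yields $\|\p_q^{j_i}u_i\|\leq \|u_i\|_{\ell+1,\mu}\leq M$, and this works for either choice of $\|\cdot\|$. When $j_i\geq \ell+1$, the hypothesis \reff{condition3} gives $\|\p_q^{j_i}u_i\|\leq H^{j_i-\ell}(j_i-\ell-1)!$. I would then split the triple sum into the eight subcases indexed by the subset $S\subseteq\{1,2,3\}$ of those $i$ with $j_i\geq \ell+1$. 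In each subcase the indices outside $S$ range over the finite set $\{0,1,\ldots,\ell\}$, contributing at most $(\ell+1)^{3}M^{3}$, while those inside $S$ are summed under the constraint $\sum_{i\in S}j_i=k-\sum_{i\notin S}j_i$. The factor $H^{k-\ell}$ drops out automatically since $\prod_{i\in S}H^{j_i-\ell}\leq H^{k-\ell}$ whenever $H\geq 1$ and $|S|\geq 1$; the residual case $S=\emptyset$ occurs only for $k\leq 3\ell$, a finite range handled trivially.

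The main obstacle is the purely combinatorial estimate
\[
\sum_{\substack{j_1+\cdots+j_r=n\\ j_i\geq \ell+1}}\frac{n!}{j_1!\cdots j_r!}\prod_{i=1}^{r}(j_i-\ell-1)!\,\leq\, C_\ell\,(n-\ell-1)!,
\]
uniform in $n$, for $r\in\{1,2,3\}$. Substituting $i_s=j_s-\ell-1\geq 0$ rewrites the left side as $n!\sum_{i_1+\cdots+i_r=n-r(\ell+1)}\prod_{s=1}^{r}\frac{1}{(i_s+1)(i_s+2)\cdots(i_s+\ell+1)}$, where each factor is dominated by $1/(i_s+1)^{\ell+1}$ with $\ell+1\geq 2$. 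A standard splitting of the convolution sum (isolating the largest $i_s$ and bounding the remaining factors via the convergent series $\sum_m(m+1)^{-(\ell+1)}$) shows that the convolution is $O(n^{-(\ell+1)})$, which cancels the excess $n(n-1)\cdots(n-\ell)$ coming from $n!/(n-\ell-1)!$. Summing the eight subcase contributions then yields the desired estimate, with the $M^{3}$ from at most three low-order factors absorbed harmlessly into $M^{6}$ since $M\geq 1$, and with the final constant $C_*$ depending only on $\ell$.
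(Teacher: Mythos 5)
Your argument is correct, and it takes a genuinely different route from the paper. The paper's proof is \emph{iterative}: it first establishes the lemma for a product of two factors, via the binomial Leibniz formula and a three-part decomposition of the sum (low $j$, middle $j$, high $j$), obtaining a bound with factor $\inner{\norm{u_1}_{\ell+1,\mu}+\norm{u_2}_{\ell+1,\mu}+1}^2$; it then applies that two-factor result to the pair $\bigl(b_5^{-1}(\cdots)^{-2}u_1u_2,\, u_3\bigr)$ to bootstrap up to three factors, which is how the exponent $6$ arises. You instead expand $\p_q^k(u_1u_2u_3)$ directly via the trinomial Leibniz formula and partition the index set by the subset $S\subseteq\{1,2,3\}$ of ``high-order'' factors. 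The combinatorial heart is the same in both treatments --- one needs that the convolution of sequences with tails $O\bigl((j+1)^{-(\ell+1)}\bigr)$, $\ell+1\geq 2$, is again of the same order --- and your change of variables $i_s=j_s-\ell-1$ followed by isolating the largest index is equivalent to the paper's estimate $\sum_j \frac{k^{\ell+1}}{j^{\ell+1}(k-j)^{\ell+1}}\lesssim 1$. What your direct route buys is a cleaner accounting: you get the H\"older prefactor with exponent $3$ rather than $6$ (absorbed into $M^6$ since $M\geq 1$, so the stated bound still holds), at the cost of an $8$-way case split rather than a $3$-way split applied twice. What the paper's iterative route buys is that it extends immediately to products of arbitrarily many factors without re-examining the multinomial combinatorics.

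One detail you gloss over slightly: when you invoke the combinatorial lemma with $r=|S|$ and $n=k-\sum_{i\notin S}j_i$, the actual inner sum carries the coefficient $k!/(j_1!j_2!j_3!)$, not $n!/\prod_{i\in S}j_i!$, leaving a residual factor $\dfrac{k!}{n!\prod_{i\notin S}j_i!}$ of degree $k-n=\sum_{i\notin S}j_i\leq 2\ell$ in $k$. This must be balanced against the ratio $(n-\ell-1)!/(k-\ell-1)!$, which is $\Theta(k^{-(k-n)})$. Because $k-n$ is bounded by $2\ell$, the two quantities do cancel uniformly in $k$ (up to a constant depending only on $\ell$), but you should say so explicitly. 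This is routine bookkeeping, not a genuine obstruction, so the argument stands.
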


We now prove Proposition \ref{propassu}.

\begin{proof}[Proof of Proposition \ref{propassu}]
  In view of Remark \ref{remReg} we may assume  that  $\p_q^k h\in
  C^{2,\mu}(\bar R)$ for any $k\in\N$.
Now we prove the validity of  $(E_m)$ by using induction on $m$.   For $m=2
  $, $(E_m)$ obviously holds if we choose
\begin{eqnarray*}
L\geq \norm{\p_q^2 h}_{2,\mu}+1.
\end{eqnarray*}
 Now let $m\geq 3$ and  assume that $(E_j)$ holds for all $j\in\N$ with $2\leq j\leq
  m-1$, that is,
\begin{equation}\label{nomind}
\norm{\p^j_q h}_{2,\mu}\leq L^{j-1}(j-2)!, \quad 2\leq
j\leq m-1.
\end{equation}
Then we   show the validity of $(E_m)$.   For this purpose,
taking  the derivative with respect to $q$ up to order  $m$   on
both sides of  equations \reff{Equh1}-\reff{Equh3},  and then applying
Leibniz formula, we have
\begin{eqnarray}\label{oprtAB}
\left\{
\begin{array}{lll}
  A(h)[\p_q^m h]=f_1+f_2 \quad & {\rm in}~~R,\\[3pt]
  B(h)[\p_q^m h]=\varphi_1+\varphi_2\quad  &{\rm on}~~p=0,\\[3pt]
  \p_q^m h=0 \quad & {\rm on}~~p=p_0,
  \end{array}
  \right.
  \end{eqnarray}
where the operators $A(h)$ and $B(h)$ are defined by
\[
  A(h)[\phi]= (1+h_q^2)\phi_{pp}-2h_q h_p \phi_{qp}+h_p^2\phi_{qq},
  \quad \quad B(h)[\phi]=h_q\phi_q+(2gh-Q)h_p\phi_p+2gh_p^2\phi,
\]
and the right-hand side
\begin{eqnarray}
  f_1&=&\sum_{n=1}^m {m\choose n}\Big[-(\p_q^n h^2_q)(\p_q^{m-n}h_{pp})+2\inner{\p_q^n (h_ph_q)}(\p_q^{m-n} h_{pq})-(\p_q^n h^2_p)(\p_q^{m-n}h_{qq})\Big],\label{def f1}\\
  f_2&=&-\gamma(p)(\p_q^m h^3_p),\label{def f2}\\
  \varphi_1&=&-\frac{1}{2}\sum_{n=1}^{m-1}{m\choose n}(\p_q^n h_q)(\p_q^{m-n} h_q)-\frac{1}{2}(2gh-Q)\sum_{n=1}^{ m-1}{m\choose n}(\p_q^n h_p)(\p_q^{m-n} h_p),\label{defvp1}\\
  \varphi_2&=&-g\sum_{1\leq n\leq m-1}{m\choose n}(\p_q^n h)(\p_q^{m-n}
  h^2_p) \label{defvp2}.
\end{eqnarray}
The operator $A(h)$ is uniformly elliptic since its coefficients satisfy
\begin{eqnarray*}
  (1+h_q^2)h_p^2-h^2_q h^2_p=h_p^2\geq \inf_{\bar R}h_p^2>0
\end{eqnarray*}
due to \reff{hp bounded}. Also the boundary operator
$B(h)$ is uniformly oblique in the sense that it is bounded away from being tangential;
the coefficient $(2gh-Q)h_p$ of $\phi_p$ is nonzero and satisfies
\begin{eqnarray*}
  (2gh-Q)h_p=\frac{1+h_q^2}{h_p}\geq \frac{1}{\sup_{\bar R}h_p}\geq \delta
\end{eqnarray*}
in view of the boundary condition \reff{Equh2} and \reff{hp
  bounded}. Since $h\in C^{2,\mu}(\bar R)$  the coefficients of the operators $A(h)$ and $B(h)$ are in $C^{1,\mu}(\bar R)$. Moreover, by virtue of the induction assumption \reff{nomind}, one has $\p_q^i\p_p^j h\in C^{0,\mu}(\bar R)$ for all multi-index $(i,j)$ with $i+j\leq m+1$ and $j\leq 2$, and similarly $\p_q^i\p_p^j h\in C^{1,\mu}(\bar R)$ for all multi-index $(i,j)$ with $i+j\leq m$ and $j\leq 1$. As a result,
the right-hand side $f_i\in C^{0,\mu}(\bar R)$ and $\varphi_i\in
C^{1,\mu}(\bar R)$, $i=1,2$,   since by \reff{algebra} the
product of two functions in $C^{k,\mu}(\bar R)$ is still in
$C^{k,\mu}(\bar R)$ with $k=0,1$. Thus,  using the  standard Schauder
estimate (see for instance \cite[Theorem 6.30]{MR1814364}) we have,
\begin{eqnarray}\label{par m h}
  \norm{\p_q^m h}_{2,\mu}\leq \mathcal C \inner{\norm{\p_q^m h}_{0}+\sum_{i=1}^2\norm{f_i}_{0,\mu}+\sum_{i=1}^2\norm{\varphi_i}_{1,\mu}},
\end{eqnarray}
 where $\mathcal C$ is a constant depending only on $\mu,
\delta,\inf_{\bar R}h_p$ and $ \norm{h}_{2,\mu}$.  To show
 $(E_m)$ is valid, we estimate the terms on the  right-hand
 side of \reff{par m h} through the following
 steps.

To simplify the notations,  we will use $C_j, j\geq 1$, to denote
suitable {\it  harmless constants} larger than 1. By harmless
constants we mean these  are
independent of  $m$.

{\it Step 1)}~We claim that there exists $C_1>0$ such that, with $m\geq 3$,
\begin{equation}\label{step1}
\norm{\p_q^m h}_{0}\leq C_1 L^{m-2}(m-2)!.
\end{equation}
Indeed, when  $m=3$ the above estimate obviously holds if we choose
$C_1=\norm{h}_{3,\mu}+1$; when $m\geq 4$
 it follows  from the induction assumption \reff{nomind}  that
 \begin{eqnarray*}
   \norm{\p_q^m h}_{0}\leq \norm{\p_q^{m-2}
     h}_{2,\mu}\leq L^{m-3}(m-4)!\leq  L^{m-2}(m-2)!.
 \end{eqnarray*}
Then \reff{step1} follows.

{\it Step 2)}~Let $f_1$ be given in \reff{def f1}.  In this step we  prove
\begin{equation}\label{step2}
\norm{f_1}_{0,\mu}\leq  C_{2} L^{m-2}(m-2)!.
\end{equation}
Observe that ,   by \reff{algebra},
\begin{eqnarray}\label{est f1}
  \begin{split}
    \norm{f_1}_{0,\mu}& \leq& \sum_{ n=1}^{m}{m\choose
       n}\norm{\p_q^n h^2_q}_{0,\mu}\norm{\p_q^{m-n}h_{pp}}_{0,\mu}+2\sum_{ n=1}^{m} {m\choose
       n}\norm{\p_q^n (h_ph_q)}_{0,\mu} \norm{\p_q^{m-n} h_{pq}}_{0,\mu}\\
&&+\sum_{ n=1}^{m} {m\choose
       n}\norm{\p_q^n h^2_p}_{0,\mu}  \norm{\p_q^{m-n}h_{qq}}_{0,\mu}.
   \end{split}
\end{eqnarray}
We now treat the first term on the right-hand side, and write
   \begin{eqnarray}\label{est f1 1}
\begin{split}
     &\sum_{1\leq n\leq m} {m\choose
       n}\norm{\p_q^n
       h^2_q}_{0,\mu}\norm{\p_q^{m-n}h_{pp}}_{0,\mu}\leq
\sum_{1\leq n\leq m} {m\choose
       n}\norm{\p_q^n
       h^2_q}_{0,\mu}\norm{\p_q^{m-n}h}_{2,\mu}\\
&\leq \inner{\sum_{1\leq
         n\leq 2} +\sum_{3\leq n\leq m-2 } +\sum_{ m-1\leq n\leq m}
     }{m\choose n}\norm{\p_q^n
       h^2_q}_{0,\mu}\norm{\p_q^{m-n}h}_{2,\mu}.
\end{split}
 \end{eqnarray}
 By the induction assumption \reff{nomind}, one has
 \begin{eqnarray*}
  \forall~3\leq n \leq m, \quad \norm{\p_q^n h_q}_{0,\mu}\leq \norm{\p_q^{n-1} h}_{2,\mu}\leq L^{n-2}(n-3)!.
 \end{eqnarray*}
 Thus applying Lemma \ref{stab}, with $\ell=2$, $k_0=m$, $H=L$,
$u_1=u_2=h_q$ and $u_3=1$, yields that
 \begin{eqnarray}\label{par n hq2}
   \forall ~3\leq  n \leq m,\quad \norm{\p_q^n h^2_q}_{0,\mu}\leq C_5 L^{n-2}(n-3)!.
 \end{eqnarray}
 Moreover, we have
 \begin{eqnarray}\label{q m-n h}
  \forall~1\leq n\leq m-2,\quad \norm{\p_q^{m-n}h}_{2,\mu}\leq L^{m-n-1}(m-n-2)!
 \end{eqnarray}
due to the induction assumption \reff{nomind}.
Then using the above two estimates, straightforward verification shows that
\begin{equation}\label{1}
\sum_{n=1}^{2} {m\choose n}\norm{\p_q^n
  h^2_q}_{0,\mu}\norm{\p_q^{m-n}h}_{2,\mu}+\sum_{n=m-1}^{m} {m\choose n}\norm{\p_q^n
  h^2_q}_{0,\mu}\norm{\p_q^{m-n}h}_{2,\mu}
\leq C_{6}  L^{m-2}(m-3)!
\end{equation}
if we choose
\begin{eqnarray*}
 C_{6}\geq(\norm{h}_{3,\mu}+1)(30\norm{h}_{3,\mu}+4C_5+6).
\end{eqnarray*}
Next for the case when $3\leq n\leq m-2$, which  appears only when $m\geq 5$,
combination of the estimates \reff{par n hq2} and \reff{q m-n h} gives
\begin{eqnarray*}
\sum_{n=3}^{m- 2} {m\choose n}\norm{\p_q^n
  h^2_q}_{0,\mu}\norm{\p_q^{m-n}h}_{2,\mu}
&\leq & C_5 \sum_{3\leq n\leq m- 2} \frac{m!}{n!(m-n)!} L^{n-2}(n-3)!
L^{m-n-1}(m-n-2)!\\
&\leq &  C_{7}L^{m-3}(m-2)!\sum_{3\leq n\leq m- 2}\frac{m^2}{n^3(m-n)^2}\\
&\leq & C_{8}L^{m-3}(m-2)!.
\end{eqnarray*}
This along with  \reff{1} shows, in view of \reff{est f1 1},
\begin{eqnarray*}
  \sum_{1\leq n\leq m} {m\choose
       n}\norm{\p_q^n h^2_q}_{0,\mu}\norm{\p_q^{m-n}h_{pp}}_{0,\mu}
     \leq  \inner{C_{6}+C_{8}}L^{m-2}(m-2)!.
\end{eqnarray*}
Similarly,
we can find a constant $C_{9}$ such that
\begin{eqnarray*}
  2\sum_{n=1}^{m} {m\choose
       n}\norm{\p_q^n (h_ph_q)}_{0,\mu} \norm{\p_q^{m-n} h_{pq}}_{0,\mu}
 +\sum_{n=1}^{m} {m\choose
       n}\norm{\p_q^n h^2_p}_{0,\mu}  \norm{\p_q^{m-n}h_{qq}}_{0,\mu}\leq C_{9}L^{m-2}(m-2)!.
\end{eqnarray*}
Inserting the above two estimates into \reff{est f1}, we get the desired estimate
\reff{step2} by choosing $C_{2}=C_{6}+C_{8}+C_{9}$.

{\it Step 3)}~Let $f_2$ be given in \reff{def f2}.  We now prove
\begin{equation}\label{step3}
\norm{f_2}_{0,\mu}\leq  C_{3} L^{m-2}(m-2)!.
\end{equation}
 In fact,
using  \reff{algebra} we have
   \begin{eqnarray}\label{est f4}
   \begin{split}
     \norm{f_2}_{0,\mu}&\leq \norm{ \gamma}_{0,\mu} \norm{\p_q^m h^3_p}_{0,\mu}.
   \end{split}
 \end{eqnarray}
By the induction assumption \reff{nomind}, one has
\begin{eqnarray*}
\forall~3\leq j\leq m,\quad \norm{\p_q^j h_p}_{0,\mu}\leq \norm{\p_q^{j-1} h}_{2,\mu}\leq L^{j-2}(j-3)!.
\end{eqnarray*}
Then  using  Lemma \ref{stab},   with $\ell=2$,  $k_0=m$ ,  $H=L$,
$u_1=u_2=u_3=h_p$,  we conclude
\begin{eqnarray*}
  \norm{\p_q^m h_p^3}_{0,\mu}\leq C_{10}  L^{m-2}(m-3)!.
\end{eqnarray*}
Choosing $C_{3}=C_{10}\norm{\gamma}_{0,\mu}+1$, we obtain \reff{step3} in view of \reff{est f4}.

{\it Step 4)}~ Finally  we  prove, with $\varphi_1$ and $\varphi_2$ given in \reff{defvp1} and \reff{defvp2},
\begin{equation}\label{step4}
  \sum_{i=1}^2\norm{\varphi_i}_{1,\mu}\leq C_{4}L^{m-2}(m-2)!.
\end{equation}
First for $\norm{\varphi_1}_{1,\mu}$, we have
\begin{eqnarray*}
  \norm{\varphi_1}_{1,\mu}\leq C_{11}L^{m-2}(m-2)!.
\end{eqnarray*}
The proof is quite similar as that of \reff{step2} for $\norm{f_1}_{0,\mu}$, and is in fact simpler since we do not need to use Lemma \ref{stab}, so we omit the details.
 Next for $\norm{\varphi_2}_{1,\mu}$,  we write,  by \reff{algebra},
 \begin{eqnarray}\label{est varphi2}
   \begin{split}
   \norm{\varphi_2}_{1,\mu}&\leq 2g\sum_{1\leq n\leq m-1}{m\choose n}\norm{\p_q^n h}_{1,\mu}\norm{\p_q^{m-n} h^2_p}_{1,\mu}\\
   &\leq 2g\inner{\sum_{1\leq n\leq 2}+\sum_{3\leq n\leq m-2}+\sum_{n= m-1}}{m\choose n}\norm{\p_q^{n} h}_{1,\mu}\norm{\p_q^{m-n} h^2_p}_{1,\mu}.
   \end{split}
 \end{eqnarray}
 By the induction assumption \reff{nomind}, one has
 \begin{equation}\label{3}
  \forall~3\leq n\leq m,\quad \norm{\p_q^{n} h}_{1,\mu}\leq\norm{\p_q^{n-1} h}_{2,\mu}\leq L^{n-2}(n-3)!,
 \end{equation}
 and for $1\leq n\leq m-2$,
\begin{eqnarray*}
\forall~2\leq j\leq m-n,\quad \norm{\p_q^j h_p}_{1,\mu}\leq \norm{\p_q^{j} h}_{2,\mu}\leq L^{j-1}(j-2)!.
\end{eqnarray*}
This last estimate allows us to use Lemma \ref{stab},   with $\ell=1,$
$k_0=m-n$ with $1\leq n\leq m-2$ ,  $H=L$,
$u_1=u_2=h_p$ and $u_3=1$,  to conclude,
\begin{equation}\label{4}
 \forall~1\leq n\leq m-2,\quad \norm{\p_q^{m-n} h^2_p}_{1,\mu}\leq C_{12} L^{m-n-1}(m-n-2)!.
\end{equation}
In virtue of \reff{3} and \reff{4}, direct verification shows
\begin{eqnarray}\label{est varphi2 1}
   2g\inner{\sum_{1\leq n\leq 2}+\sum_{n= m-1}}{m\choose n}\norm{\p_q^{n} h}_{1,\mu}\norm{\p_q^{m-n} h^2_p}_{1,\mu}\leq C_{13}L^{m-2}(m-2)!.
\end{eqnarray}
Next for the case when $3\leq n\leq m-2$, which appears only when
$m\geq 5$, we use again \reff{3} and \reff{4} to compute
\begin{eqnarray*}
    &&2g\sum_{3\leq n\leq m-2}{m\choose n}\norm{\p_q^{n} h}_{1,\mu}\norm{\p_q^{m-n} h^2_p}_{1,\mu}\\
   & \leq &2gC_{12}\sum_{3\leq n\leq m-2}\frac{m!}{n!(m-n)!}L^{n-2}(n-3)!L^{m-n-1}(m-n-2)!\\
   &\leq & C_{14}L^{m-3}(m-2)!\sum_{3\leq n\leq m-2}\frac{m^2}{n^3(m-n)^2}\\
   &\leq & C_{15}L^{m-3}(m-2)!.
\end{eqnarray*}
Inserting \reff{est varphi2 1}  and the  above estimate  into \reff{est varphi2},
we obtain
\[
    \norm{\varphi_2}_{1,\mu}\leq \inner{C_{13}+C_{15}}L^{m-2}(m-2)!.
\]
Thus the desired estimate \reff{step4} follows by choosing $C_4=C_{11}+C_{13}+C_{15}$.

Now we come back to the proof of Proposition \ref{propassu}.  Choose
$L$ in such a way that
\begin{equation*}
 L\geq\mathcal C\inner{C_1+ C_2+ C_3+ C_4}+\norm{\p_q^2 h}_{2,\mu}+1
\end{equation*}
with $\mathcal C, C_1,\cdots,C_4$ the constants given in \reff{par m h},
\reff{step1}, \reff{step2}, \reff{step3} and \reff{step4}.   Then
combining
\reff{par m h},  \reff{step1}, \reff{step2}, \reff{step3} and \reff{step4},  we have,
 \begin{eqnarray*}
    \norm{\p_q^m h}_{2,\mu}\leq \mathcal C\inner{C_1+ C_2+ C_3+ C_4} L^{m-2}(m-2)!\leq L^{m-1}(m-2)!.
 \end{eqnarray*}
 The validity of $(E_m)$ follows. Thus  the proof of
 Proposition \ref{propassu} is complete.
\end{proof}

The rest of this section is occupied by

\begin{proof}[Proof of Lemma \ref{stab}]
In what follows we always assume $ \ell+1\leq k\leq k_0$.    To simplify the notation, we use $b_i, i\geq 1$, to denote suitable
constants larger than $1$, depending  only on $\ell$.

Firstly let $u_3\equiv 1$.
By Leibniz formula we have
\begin{eqnarray*}
  \p_q^k\inner{u_1u_2}=\sum_{0\leq j\leq k} {k \choose j}
   \inner{ \p_q^ju_1}   \inner{ \p_q^{k-j}u_2}.
\end{eqnarray*}
Note that $\norm{\cdot}$ stands for the
H\"{o}lder norm $\norm{\cdot}_{0,\mu}$ or  $\norm{\cdot}_{1,\mu}$.
Then  from \reff{algebra} it follows that
\begin{eqnarray*}
  \norm{\p_q^k\inner{u_1u_2}}&\leq& 2 \sum_{0\leq j\leq k} \frac{k!}{j !(k-j)!}
   \norm{ \p_q^ju_1} \norm{
       \p_q^{k-j}u_2}\\
&\leq& S_1+S_2+S_3
\end{eqnarray*}
with
\begin{eqnarray*}
       S_1&=&2 \sum_{{0\leq j\leq \ell}}  \frac{k!}{j !(k-j)!}
   \norm{\p_q^ju_1} \norm{
       \p_q^{k-j}u_2},\\
   S_2&=&2\sum_{{\ell+1\leq j\leq k-\ell-1}}  \frac{k!}{j !(k-j)!}
   \norm{ \p_q^ju_1} \norm{
       \p_q^{k-j}u_2},\\
S_3&=&2 \sum_{{k-\ell\leq j\leq k}}  \frac{k!}{j !(k-j)!}
   \norm{ \p_q^ju_1} \norm{
       \p_q^{k-j}u_2}.
\end{eqnarray*}
Using the assumption \reff{condition3}, direct computation shows that
there exists a constant $b_1>1$, depending only on $\ell$, such that
   \begin{eqnarray*}
    S_1+S_3\leq b_1\inner{\norm{u_1}_{\ell+1,\mu}+\norm{u_2}_{\ell+1,\mu}+1}^2 H^{k-\ell}(k-\ell-1)!.
   \end{eqnarray*}
For $S_2$, which appears only when $k\geq 2\ell+2$, we have
\begin{eqnarray*}
    S_2&\leq& 2 \sum_{{\ell+1\leq j\leq k-\ell-1}}\frac{k!}{j !(k-j)!}
     H^{j-\ell}(j-\ell-1)!H^{k-j-\ell}(k- j-\ell-1)! \\
  &\leq& b_2\sum_{{\ell+1\leq j\leq k-\ell-1}}\frac{k!}{j^{\ell+1} (k-j)^{\ell+1}}
     H^{k-2\ell}\\
  &\leq& b_3 H^{k-2\ell} (k-\ell-1)! \sum_{{\ell+1\leq j\leq k-\ell-1}} \frac{k^{\ell+1}}{ j^{\ell+1}\inner{k-j}^{\ell+1}}\\
  &\leq&  b_4 H^{k-\ell}(k-\ell-1)!.
\end{eqnarray*}
In view of the estimates for $S_1,S_2$ and $S_3$, we conclude
\begin{equation*}
  \norm{\p_q^{k}(u_1u_2)}\leq b_5 \inner{\norm{u_1}_{\ell+1,\mu}+\norm{u_2}_{\ell+1,\mu}+1}^2  H^{k-\ell}(k-\ell-1)!
\end{equation*}
by choosing $b_5=b_1+b_4$.

Now we consider the case when $u_3\not\equiv 1$.  We have shown above
that
\begin{eqnarray*}
  \forall~\ell+1\leq k\leq k_0,  \quad  \|\p_q^k (u_1u_2)\|
  \leq b_5 \inner{\norm{u_1}_{\ell+1,\mu}+\norm{u_2}_{\ell+1,\mu}+1}^2  H^{k-\ell}(k-\ell-1)!,
\end{eqnarray*}
provided $u_1$ and  $u_2$ satisfy \reff{condition3}.
This allows us to use the same argument as above to the two
functions $$b_5^{-1}\inner{\norm{u_1}_{\ell+1,\mu}+\norm{u_2}_{\ell+1,\mu}+1}^{-2}
u_1u_2 ~{\rm and}~ u_3; $$
this gives, for any $\ell+1\leq k\leq k_0$,
\begin{eqnarray*}
   \norm{\p_q^k\inner{u_1u_2 u_3}}&\leq&  b_5^2
   \inner{\norm{u_1}_{\ell+1,\mu}+\norm{u_2}_{\ell+1,\mu}+1}^2\inner{\norm{u_1u_2}_{\ell+1,\mu}+\norm{u_3}_{\ell+1,\mu}+1}^2
   H^{k-\ell}(k-\ell-1)!\\
&\leq&  b_6
   \inner{\norm{u_1}_{\ell+1,\mu}+\norm{u_2}_{\ell+1,\mu}+\norm{u_3}_{\ell+1,\mu}+1}^6 H^{k-\ell}(k-\ell-1)!.
\end{eqnarray*}
The conclusion follows by choosing $C_*=b_6$. Then
the proof of Lemma \ref{stab} is complete.
\end{proof}

\section{Gevrey regularity of stream function}\label{sec4}

Let $G^s\inner{[p_0,  0]}$,  $s\geq 1$,  be the Gevrey class; see
Definition \ref{def gevrey} of Gevrey function.   In
this section we assume  $\gamma\in G^s\inner{[p_0,  0]}$.  Then by
the alternative characterization  of Gevrey function,   for any $p\in[p_0, 0]$ we can
find a neighborhood $U_p$ of $p$ and a constant $M_p$ such that
\[
\forall~k\in \mathbb N,\quad \sup_{t\in U_p\cap [p_0,0]} \abs{\p_p^k
  \gamma(t)}\leq M_p^{k+1}(k!)^s.
\]
Note $[p_0,0]$ is compact in $\mathbb R$; this allows us to find a constant $M$ such
that
\begin{equation}\label{GevCon}
\forall~k\in \mathbb N,\quad \sup_{p\in[p_0, 0]}\abs{\p_p^k
  \gamma(p)}\leq M^{k+1}(k!)^s.
\end{equation}

We prove now the Gevrey regularity of stream function,
i.e., Theorem \ref{th2}. In view of Proposition \ref{reserv},  it suffices to
show the following result for the height function $h(q,p)$.

\begin{proposition}\label{prophpq}
 Let $\gamma\in G^s\inner{[p_0,  0]}$ with $s\geq 1$,  and let $h\in C^{2,\mu}(\bar
 R)$ be a solution to \reff{Equh1}-\reff{Equh3}.
Then there exist two  constants $ L_1, L_2$ with $L_2\geq L_1\geq 1$ ,  such
that for any $m\geq 2$  we have the
following estimate
   \begin{eqnarray*}
     (F_m):\qquad \forall~\alpha=(\alpha_1,\alpha_2)\in\N^2,~ \abs\alpha=m,\quad  \norm{\p^{\alpha}h}_{2}\leq
     L_1^{\alpha_1-1}L_2^{\alpha_2} [(\abs\alpha-2)!]^s.
   \end{eqnarray*}
Recall $\norm{\cdot}_2$ stands for the  H\"{o}lder norm $\norm{\cdot}_{C^{2,0}(\bar R)}$. Thus $h\in
 G^s(\bar R)$;  in particular if $s=1$ then $h$ is analytic
 in $\bar R$.
\end{proposition}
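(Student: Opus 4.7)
The plan is to prove $(F_m)$ by induction on $\alpha_2$, the number of $p$-derivatives, with Proposition \ref{propassu} supplying the base case $\alpha_2=0$. The essential algebraic input is that the equation \reff{Equh1}, solved for $h_{pp}$, reads
\begin{equation*}
h_{pp}=\frac{1}{1+h_q^2}\inner{2h_ph_q h_{pq}-h_p^2 h_{qq}-\gamma(p) h_p^3},
\end{equation*}
where $1+h_q^2\geq 1$. Each factor of $h$ on the right-hand side carries at most one $p$-derivative, so applying $\p_q^{\alpha_1}\p_p^{\alpha_2-2}$ to this identity, for $\alpha_2\geq 2$, expresses $\p^\alpha h$ as a sum of products of derivatives of $h$ of $p$-order $\leq \alpha_2-1$, derivatives of $\gamma$ of order $\leq \alpha_2-2$, and derivatives of $(1+h_q^2)^{-1}$.

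\textbf{Base cases.} For $\alpha_2=0$, estimate $(F_m)$ follows at once from Proposition \ref{propassu} via the inequalities $\norm{\cdot}_2\leq \norm{\cdot}_{2,\mu}$ and $(m-2)!\leq [(m-2)!]^s$, provided $L_1\geq L$. For $\alpha_2=1$, the only term in $\norm{\p_q^{\alpha_1}\p_p h}_2$ whose $p$-order exceeds $2$ is $\norm{\p_q^{\alpha_1}\p_p^3 h}_0$, which is dispatched by a single use of the identity for $h_{pp}$; all remaining factors then have $p$-order $\leq 2$ and are controlled by Proposition \ref{propassu}.

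\textbf{Inductive step.} Fix $n\geq 2$ and assume $(F_m)$ has been established for every multi-index with $p$-order strictly less than $n$. For $\alpha=(\alpha_1,n)$ with $|\alpha|=m$, the sup-norm contributions to $\norm{\p^\alpha h}_2=\sum_{|\beta|\leq 2}\norm{\p^{\alpha+\beta}h}_0$ have $p$-order up to $n+2$; one or two successive applications of the identity for $h_{pp}$ reduce each of them to a Leibniz/Fa\`a di Bruno sum in which every factor has $p$-order $\leq n-1$. The resulting summands fall into three families: (i) derivatives of $h$ of $p$-order $\leq n-1$, controlled by the inductive hypothesis; (ii) derivatives $\p_p^c\gamma$ with $c\leq n-2$, bounded by the Gevrey estimate \reff{GevCon}; (iii) derivatives of $(1+h_q^2)^{-1}$, handled by a Fa\`a di Bruno expansion that only involves $q$-derivatives of $h$, i.e.\ the $\alpha_2=0$ case. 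A mixed-derivative analogue of Lemma \ref{stab} then closes the estimate as $L_1^{\alpha_1-1}L_2^n[(m-2)!]^s$, provided $L_2$ is chosen large enough with respect to $L_1$, $M$, and the composition constant for $(1+h_q^2)^{-1}$.

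\textbf{Main obstacle.} The principal technical challenge is the combinatorial bookkeeping. Leibniz and Fa\`a di Bruno both produce multinomial sums of products $\prod_i[(k_i-2)!]^s$ which must collapse to a single factor $[(m-2)!]^s$, and the pertinent inequality
\begin{equation*}
\sum_{k_1+\cdots+k_j=m}\frac{m!}{\prod_i k_i!}\prod_i[(k_i-2)!]^s\leq C^m\,[(m-2)!]^s
\end{equation*}
holds precisely when $s\geq 1$, consistent with the Gevrey restriction. Equally delicate is the Fa\`a di Bruno expansion of $(1+h_q^2)^{-1}$: every partition of the differentiation order contributes, and the analytic composition bound must be absorbed into the geometric factor $L_1^{\alpha_1-1}$, forcing $L_1$ to dominate a fixed composition constant. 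Once $(F_m)$ is established, the Gevrey conclusion $h\in G^s(\bar R)$, and analyticity when $s=1$, follow by absorbing $L_1^{\alpha_1-1}L_2^{\alpha_2}$ into a common constant $C^{|\alpha|+1}$, exactly as in the argument immediately following Proposition \ref{propassu}.
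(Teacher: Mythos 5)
Your strategy of inducting on the $p$-order $\alpha_2$ and using the equation solved for $h_{pp}$ is the right idea, but the write-up contains two substantive flaws and also takes a more complicated route than the paper for the term $(1+h_q^2)$.

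First, the induction as stated does not close. You fix $n\geq 2$ and assume $(F_m)$ for all multi-indices of $p$-order $<n$. But when you apply $\p_q^{\alpha_1}\p_p^n$ to the identity for $h_{pp}$ in order to reach $\p_q^{\alpha_1}\p_p^{n+2}h$, the Leibniz expansion puts some of the $p$-derivatives onto the factor $h_{pq}$ (a piece of $F$), producing terms like $\p_q^{\alpha_1-a}\p_p^{n}h_{pq}=\p_q^{\alpha_1-a+1}\p_p^{n+1}h$, whose $p$-order is $n+1$, not $\le n-1$ as you claim. Similarly, when your $(1+h_q^2)^{-1}$ factor absorbs $q$-derivatives but no $p$-derivatives, the complementary factor $\p^{\alpha-\beta}F$ keeps high $p$-order. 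These survivors are \emph{not} covered by the inductive hypothesis ``$p$-order $< n$''; what saves them in the paper is that their \emph{total} order is strictly less than $m+2$. The paper therefore uses a double induction -- outer on the total order $m$, inner on $n$ -- together with the triangle-inequality reduction $\norm{\p_q^{m-n}\p_p^n h}_2 \le \norm{\p_q^{m-n}\p_p^{n-1}h}_2+2\norm{\p_q^{m-n+1}\p_p^{n-1}h}_2+\norm{\p_q^{m-n}\p_p^{n+2}h}_0$, which isolates exactly one ``bad'' term. Your single induction on $\alpha_2$ is not well-founded as written.

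Second, item (iii) of your three families is wrong: $\p^\beta\big[(1+h_q^2)^{-1}\big]$ is \emph{not} a Fa\`a di Bruno sum in pure $q$-derivatives of $h$ once $\beta_2\ge 1$. Leibniz distributes $p$-derivatives onto the reciprocal factor, and then, e.g., $\p_p\big[(1+h_q^2)^{-1}\big]=-2h_qh_{qp}(1+h_q^2)^{-2}$ already involves the mixed derivative $h_{qp}$. Beyond this, the paper sidesteps the reciprocal entirely: instead of writing $h_{pp}=(1+h_q^2)^{-1}F$, it keeps the equation in the form $(1+h_q^2)\p^\alpha h_{pp}=\text{(lower)}$, and then exploits $1+h_q^2\ge 1$ to get $\norm{\p^\alpha h_{pp}}_0\le\norm{(1+h_q^2)\p^\alpha h_{pp}}_0$. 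This avoids the entire Fa\`a di Bruno machinery for $(1+h_q^2)^{-1}$ in Proposition 4.2 -- the reciprocal only enters in Section~5, where it is unavoidable because of the surface-tension boundary condition. If you do insist on dividing, you must also account for those mixed-derivative terms in your inductive hypothesis, and their presence re-introduces the need for the outer induction on $m$.
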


\begin{remark}
 As to be seen in the proof, the constants $L_1, L_2$ depend on  the
 constant $L$  given in Proposition \ref{propassu} and the constant $M$
 in \reff{GevCon},  but
 independent of the order $m$ of derivative.
\end{remark}

\begin{remark}\label{+remReg}
Note   $\gamma\in G^s([p_0,0])\subset C^\infty([p_0,0])$.  By  Remark
\ref{remReg}   we see $\p_q h \in C^{2,\mu}(\bar R)$.  Then
differentiating the
   equation \reff{Equh1} with respect to $p$,    we can  obtain $h\in
   C^{3, \mu}(\bar R)$;  see
   \cite{MR2027299} for details.    Repeating  this procedure gives  $ h\in
  C^{k,\mu}(\bar R)$  for any $k \in \mathbb N$,  since $\gamma\in C^\infty([p_0,0])$.
\end{remark}

To confirm the last statement in the above Proposition \ref{prophpq},
we   choose $C$ in such a way
that
\[
     C=\max\set{L_1, L_2,  \norm{h}_{1,\mu}},
\]
which,  along with the estimate $(F_m)$ with $m\geq 2$   in Proposition \ref{prophpq},  yields
\begin{eqnarray*}
\forall~\alpha\in\mathbb N^2, \quad \max_{(q,p)\in
  \bar R}|\partial^\alpha{h}(q,p)| \leq C^{\abs\alpha+1}(\abs \alpha!)^s .
\end{eqnarray*}
This gives $h\in
 G^s(\bar R)$.

In order to prove the above proposition, we need the following
technical lemma.

\begin{lemma}\label{+stab}

Let $s\geq 1$, and $H_1$ and $H_2$ be two  constants with $H_2\geq
H_1\geq 1$. Suppose that $\alpha_0$ is a given multi-index with $\abs{\alpha_0}\geq3$, and $u,v,w\in
C^{\abs{\alpha_0},\mu}(\bar R)$.  For $j=0,1,2$, denote
\begin{equation*}
 \mathcal{A}_{j}=\Big\{f\in C^{|\alpha_0|,\mu}(\bar R)~\big|~
   \forall~\alpha=(\alpha_1,\alpha_2)\leq\alpha_0, \,\abs{\alpha}\geq j+1, ~  \|\p^\alpha f\|_0
  \leq H_1^{\alpha_1-j}H_2^{\alpha_2}[(\abs\alpha-j-1)!]^s\Big\}.
\end{equation*}
Then there exists a constant $c_*$, depending only on the $C^{2,0}$-norms
of $u, v$ and $w$, but independent of $\alpha_0$,  such that
\begin{enumerate}[(a)]
  \item\label{conc1} if $u\in \mathcal{A}_2$ and $v\in \mathcal{A}_1$,
    then $c_*^{-1}uv\in \mathcal{A}_1$, that is,
  \[
   \forall~\alpha=(\alpha_1,\alpha_2)\leq\alpha_0,~ \abs{\alpha}\geq 2,  \quad \big\|\p^\alpha\inner{uv}\big\|_0\leq c_* H_1^{\alpha_1-1}H_2^{\alpha_2}[(\abs\alpha-2)!]^s;
  \]
 if additionally  $w\in \mathcal A_2$ then $c_*^{-1}uvw\in
 \mathcal{A}_1$;
  \item\label{conc2}  if $u\in \mathcal{A}_2$ and $v,w\in \mathcal{A}_1$, then $c_*^{-1}uvw\in \mathcal{A}_0$;
  \item\label{conc3}  if $u,v\in \mathcal{A}_2$ and $w\in
    \mathcal{A}_0$, then $c_*^{-1}uvw\in \mathcal{A}_0$;
 \item\label{conc4}  if $u, v, w\in \mathcal{A}_2$, then $c_*^{-1}H_1
   uvw\in \mathcal{A}_1$, that is,
\[
   \forall~\alpha=(\alpha_1,\alpha_2)\leq\alpha_0,~ \abs{\alpha}\geq 2,  \quad \big\|\p^\alpha\inner{uvw}\big\|_0\leq c_* H_1^{\alpha_1-2}H_2^{\alpha_2}[(\abs\alpha-2)!]^s.
  \]
\end{enumerate}

\end{lemma}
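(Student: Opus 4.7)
The plan is to apply the multi-index Leibniz formula $\partial^\alpha(uv)=\sum_{\beta\le\alpha}\binom{\alpha}{\beta}\partial^\beta u\,\partial^{\alpha-\beta}v$ (and its three-factor analogue) and to split the resulting sum into two parts. The \emph{interior} part consists of those multi-index tuples where each factor has derivative order at or above the threshold $j_i+1$ prescribed by its class $\mathcal{A}_{j_i}$. In the \emph{boundary} part at least one of the multi-indices falls below its threshold; there the "short'' factor is controlled by its $C^{2,0}$-norm, and an elementary inequality of the form $|\alpha|^k(|\alpha|-k-2)!^s\le c_k[(|\alpha|-2)!]^s$ (valid for $s\ge1$ and each fixed $k\in\{0,1,2\}$) absorbs the polynomial-in-$|\alpha|$ binomial prefactor, so the total boundary contribution is swept into $c_*$.

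In the interior, the key combinatorial device is the multi-index Vandermonde identity
\[
\sum_{\substack{\beta\le\alpha\\|\beta|=k}}\binom{\alpha}{\beta}=\binom{|\alpha|}{k}
\]
(together with its three-variable analogue), which collapses the Leibniz sum to a univariate one. Inserting the $\mathcal{A}_{j_i}$-bounds and using the elementary inequality $n_1!n_2!\le(n_1+n_2)!$ to peel off a factor $[(|\alpha|-a-b)!]^{s-1}$ (legitimate because $s\ge1$), what remains is the one-dimensional estimate
\[
|\alpha|!\sum_{k=a}^{|\alpha|-b}\frac{1}{(k)_a(|\alpha|-k)_b}\le c\,(|\alpha|-2)!,
\]
with $(k)_a=k(k-1)\cdots(k-a+1)$; this holds with a constant $c$ independent of $|\alpha|$ as soon as $a+b\ge2$, which is the case in every instance of the lemma. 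The three-variable analogue, involving a triple sum over the simplex, handles the three-factor cases.

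Parts (a)--(d) are then parallel computations differing only in the bookkeeping of the $H_1$- and $H_2$-exponents. The $H_2$-exponents sum automatically to $\alpha_2$ in every case. The $H_1$-exponent produced in the interior is $\alpha_1$ minus the sum of the thresholds $j_i$ of the factors; since $H_1\ge1$, this interior exponent is always $\le$ the target exponent prescribed by the conclusion, so the desired inequality comes for free. In particular, in (d) the extra factor $H_1$ multiplying $uvw$ is comfortably accommodated by the large deficit between the interior exponent $\alpha_1-6$ and the target $\alpha_1-2$. As a shortcut, the three-factor claims can be derived by iterating the binary argument: apply the two-factor Leibniz to $uv$, observe that $c_*^{-1}uv$ lies in the expected class $\mathcal{A}_{j}$, and then apply the two-factor argument once more with $w$.

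The main obstacle is ensuring that $c_*$ depends only on the $C^{2,0}$-norms of $u,v,w$ and on fixed numerical constants, and \emph{not} on $\alpha_0$. The delicate point is the one-dimensional sum estimate above, which must be proven uniformly in $|\alpha|$. This is standard once $a+b\ge2$ is verified: one splits the sum at $k=|\alpha|/2$, uses $(k)_a\ge c k^a$ on each half, and invokes the convergence of $\sum_{k\ge1}k^{-a}$ (or a logarithmic bound when $a=1$, still absorbed by the gap between $|\alpha|!$ and $(|\alpha|-2)!$). The borderline case $a=b=1$, which would force a genuine logarithmic loss, does not occur here: the minimum threshold sum appearing in the four conclusions is $3$.
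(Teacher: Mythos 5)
Your proposal follows essentially the same scheme as the paper's proof: expand via Leibniz, split into a \emph{boundary} part (where one factor is differentiated below its threshold and is controlled by its $C^{2,0}$-norm) and an \emph{interior} part (where all factors meet their thresholds and the $\mathcal{A}_j$-bounds plus a convergent discrete convolution do the work), and treat the triple products by iterating the binary case. The Vandermonde collapse $\sum_{|\beta|=k}\binom{\alpha}{\beta}=\binom{|\alpha|}{k}$ is a mild streamlining of what the paper does: the paper uses the termwise bound $\binom{\alpha}{\beta}\le|\alpha|!/(|\beta|!\,|\alpha-\beta|!)$ and absorbs the $O(|\beta|)$ multiplicity of multi-indices of a given length into the convergent series $\sum\frac{|\alpha|^2}{|\beta|^3(|\alpha|-|\beta|)^2}$.

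There is, however, one inaccuracy worth fixing. Your stated sufficient condition ``$a+b\ge 2$'' for the one-dimensional bound
\[
  |\alpha|!\sum_{k=a}^{|\alpha|-b}\frac{1}{(k)_a(|\alpha|-k)_b}\le c\,(|\alpha|-2)!
\]
is too weak: already for $a=2,\,b=1$ the left-hand side is comparable to $|\alpha|^2\sum_k\frac{1}{k^2(|\alpha|-k)}\sim |\alpha|$, which is unbounded. The real criterion is governed by the target class: if the conclusion asserts membership in $\mathcal A_{j}$, the right-hand side should read $(|\alpha|-j-1)!$ (so $(|\alpha|-1)!$ for parts (b) and (c), not always $(|\alpha|-2)!$), and what one needs is roughly $\min(a,b)\ge j+1$, the same split at $|\alpha|/2$ you describe then giving a $c$ independent of $|\alpha|$. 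Every instance generated by the four conclusions and by the iteration does satisfy this stronger requirement, so your overall argument goes through; but the blanket ``$a+b\ge 2$'' should be replaced by a case-by-case check of the exponents against the target factorial.
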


The proof of the above lemma is postponed to the end of this section. Now
we prove our main result.

   \begin{proof}[Proof of Proposition \ref{prophpq}]
In view of Remark \ref{+remReg} we may assume  that  $ h\in
  C^{k,\mu}(\bar R)$ for any $k\in\N$.  We now use induction on $m$ to
  prove the estimate $(F_m)$.  First for $m=2$,
   $(F_m)$ obviously holds by  choosing  $ L_1, L_2$ in such a way
   that
 \begin{equation}\label{l1l21}
   L_2\geq L_1\geq \norm{h}_{4}+1.
\end{equation}
Next let $m\geq 3$  and assume that
   $(F_j)$ holds for any $j$ with $2\leq j\leq m-1$, that is,
\begin{equation}\label{fm1}
\forall~\beta=(\beta_1,\beta_2),~ 2\leq
\beta_1+\beta_2\leq m-1, \quad \norm{\p^{\beta}h}_{2}\leq L_1^{\beta_1-1}
L_1^{\beta_2}[(\abs\beta-2)!]^s.
\end{equation}
We have to prove the validity of $(F_m)$.  This is equivalent to show
the following estimate
\begin{equation}\label{fmn}
(F_{m,n}): \qquad\norm{\p_q^{m-n}\p_p^{n}h}_{2}\leq L_1^{m-n-1}
L_2^{n}[(m-2)!]^s
\end{equation}
holds for all $n$ with $0\leq n\leq m.$

In what follows we use induction on $n$ to show \reff{fmn} with fixed
$m\geq 3$.
Firstly note that $s\geq 1$, and thus from Proposition \ref{propassu} we see that $(F_{m,0})$ holds
   if we choose
\begin{equation}\label{l1l22}
L_1\geq L.
\end{equation}   Next let $1\leq n\leq m$ and  assume that $(F_{m,i})$
holds for all $i$ with $0\leq
   i\leq n-1$, that is,
\begin{equation}\label{fn1}
\forall~0\leq i\leq n-1,\quad \norm{\p_q^{m-i}\p_p^i h}_{2}\leq L_1^{m-i-1}
L_2^{i}[(m-2)!]^s.
\end{equation}
We have to show $(F_{m,n})$ holds as well, i.e., to prove that
   \begin{equation}\label{hqmn}
     \norm{\p_q^{m-n}\p_p^n h}_{2}\leq L_1^{m-n-1}L_2^{n}[(m-2)!]^s.
   \end{equation}
To do so, we firstly compute, with $1\leq n\leq m$,
   \begin{eqnarray*}
     \norm{\p_q^{m-n}\p_p^n h}_{2}&\leq&\norm{\p_q^{m-n}\p_p^n h}_{1}+\norm{\p_q^{m-n+2}\p_p^{n} h}_{0}+\norm{\p_q^{m-n+1}\p_p^{n+1} h}_{0}+\norm{\p_q^{m-n}\p_p^{n+2} h}_{0}\\&\leq&\norm{\p_q^{m-n}\p_p^{n-1} h}_{2}+2\norm{\p_q^{m-(n-1)}\p_p^{n-1} h}_{2}+\norm{\p_q^{m-n}\p_p^{n+2} h}_{0}.
   \end{eqnarray*}
  The induction assumptions \reff{fm1} and \reff{fn1} yield
   \begin{eqnarray*}
     \norm{\p_q^{m-n}\p_p^{n-1} h}_{2}+2\norm{\p_q^{m-(n-1)}\p_p^{n-1} h}_{2}
       &\leq& L_1^{m-n-1}L_2^{n-1}[(m-3)!]^s+2L_1^{m-n}L_2^{n-1}[(m-2)!]^s\\
       &\leq& L_2^{-1}(1+2L_1) L_1^{m-n-1}L_2^{n}[(m-2)!]^s\\
       &\leq& \frac{1}{2}L_1^{m-n-1}L_2^{n}[(m-2)!]^s,
   \end{eqnarray*}
   where in the last inequality we choose
   \begin{equation}\label{l1l23}
       L_2\geq 8L_1\geq 8.
\end{equation}
 Accordingly, in order to obtain
   \reff{hqmn}, it  suffices to prove
\begin{equation}\label{+hqmn}
     \norm{\p_q^{m-n}\p_p^{n+2} h}_{0}\leq  \frac{1}{2} L_1^{m-n-1}L_2^{n}[(m-2)!]^s.
   \end{equation}
The rest is occupied by the proof of the above estimate.

From now on we fix $m$  and $n$ with $m\geq 3$ and $1\leq n\leq m$,  and denote $\alpha=(\alpha_1,\alpha_2)=(m-n,n)$.  Applying $\p^{\alpha}=\p_q^{m-n}\p_p^n$ on both sides of the equation \reff{Equh1} gives
   \begin{eqnarray*}
     (1+h_q^2)(\p^\alpha h_{pp})=-\sum_{\beta\leq \alpha,\beta\neq 0}{\alpha\choose\beta}(\p^\beta h_q^2)(\p^{\alpha-\beta}h_{pp})+2\p^{\alpha}(h_p h_q h_{qp})-\p^{\alpha}(h_p^2h_{qq})-\p^{\alpha}\inner{\gamma h_p^3},
   \end{eqnarray*}
   which implies
   \begin{eqnarray*}
     \norm{(1+h_q^2)(\p^\alpha h_{pp})}_{0}&\leq& \sum_{\beta\leq \alpha,\beta\neq 0}{\alpha\choose\beta}\norm{\p^\beta h_q^2}_{0}\norm{\p^{\alpha-\beta}h_{pp}}_{0}+2\norm{\p^{\alpha}(h_p h_q h_{qp})}_{0}\\[3pt]
     &&+\norm{\p^{\alpha}(h_p^2h_{qq})}_{0}+\norm{\p^{\alpha}\inner{\gamma h_p^3}}_{0}.
   \end{eqnarray*}
   Since
   \begin{equation}\label{9}
     \norm{\p_q^{m-n}\p_p^{n+2} h}_{0}=\norm{\p^\alpha h_{pp}}_{0}\leq\norm{(1+h_q^2)(\p^\alpha h_{pp})}_{0},
   \end{equation}
we obtain,  with  $\alpha=(\alpha_1,\alpha_2)=(m-n,n)$,
\begin{equation}\label{5}
    \begin{split}
     \norm{\p_q^{m-n}\p_p^{n+2} h}_{0}\leq& \sum_{\beta\leq \alpha,\beta\neq 0}{\alpha\choose\beta}\norm{\p^\beta h_q^2}_{0}\norm{\p^{\alpha-\beta}h_{pp}}_{0}+2\norm{\p^{\alpha}(h_p h_q h_{qp})}_{0}\\
     &+\norm{\p^{\alpha}(h_p^2h_{qq})}_{0}+\norm{\p^{\alpha}\inner{\gamma h_p^3}}_{0}.
    \end{split}
   \end{equation}
We now treat the  terms on the right-hand side through the
following lemmas.

To simplify the notations,  we will use $c_j, j\geq 1$, to denote
suitable {\it harmless constants} larger than $1$.  By harmless constants it
means that these constants are independent of
$m$ and $n$.

    \begin{lemma}\label{lem401}
   For $\alpha=(\alpha_1,\alpha_2)=(m-n,n)$ with
     $1\leq n\leq m$,  we have
      \begin{equation*}
       \sum_{\beta\leq \alpha,\beta\neq
        0}{\alpha\choose\beta}\norm{\p^\beta
        h_q^2}_{0}\norm{\p^{\alpha-\beta}h_{pp}}_{0}\leq c_1 L_1^{\alpha_1-2}L_2^{\alpha_2} [(\abs{\alpha}-2)!]^{s}.
    \end{equation*}
    \end{lemma}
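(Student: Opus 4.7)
The plan is to estimate the sum term-by-term via the Leibniz rule, splitting the range of $|\beta|$ and invoking the two induction hypotheses available: the outer one $(F_j)$ for $2\leq j\leq m-1$ and the inner one $(F_{m,i})$ for $0\leq i\leq n-1$. The crucial observation is that the hypothesis $\beta\neq 0$ gives $|\alpha-\beta|\leq m-1$, so the factor $\norm{\p^{\alpha-\beta}h_{pp}}_{0}$ is controlled by the elementary inequality $\norm{\p^{\alpha-\beta}h_{pp}}_{0}\leq\norm{\p^{\alpha-\beta}h}_{2}$ combined with $(F_{m-|\beta|})$; the as-yet-unproven $(F_m)$ is never needed.

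First I would verify that $h_q$ satisfies the $\mathcal{A}_2$ condition of Lemma \ref{+stab} with $H_1=L_1$, $H_2=L_2$, $\alpha_0=\alpha$, i.e.
\[
\norm{\p^\gamma h_q}_{0} \leq L_1^{\gamma_1-2}L_2^{\gamma_2}\big[(|\gamma|-3)!\big]^{s}\qquad(3\leq|\gamma|\leq m).
\]
When $\gamma_1\geq 1$ this follows from $\norm{\p^\gamma h_q}_{0}=\norm{\p^{\gamma+e_1}h}_{0}\leq\norm{\p^{\gamma-e_1}h}_{2}$ together with $(F_{|\gamma|-1})$; when $\gamma_1=0$ one uses instead $\norm{\p_q\p_p^{\gamma_2}h}_{0}\leq\norm{\p^{(1,\gamma_2-2)}h}_{2}$, followed by $(F_{\gamma_2-1})$ and the inequality $L_2\geq L_1$ from \reff{l1l23} to convert the resulting $L_2^{-2}$ into $L_1^{-2}$. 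In parallel, for $1\leq|\beta|\leq m-2$ one obtains
\[
\norm{\p^{\alpha-\beta}h_{pp}}_{0}\leq L_1^{\alpha_1-\beta_1-1}L_2^{\alpha_2-\beta_2}\big[(m-|\beta|-2)!\big]^{s},
\]
with a uniform constant bound for $|\beta|\in\{m-1,m\}$.

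Next I would expand $\p^\beta h_q^2$ by Leibniz into $\sum_{\delta\leq\beta}\binom{\beta}{\delta}(\p^\delta h_q)(\p^{\beta-\delta}h_q)$, insert the $\mathcal{A}_2$ estimate above for each factor $\p^\delta h_q$ (replacing it by a direct $\norm{h}_{3}$-type constant bound whenever $|\delta|\leq 2$ or $|\beta-\delta|\leq 2$), and sum against the estimate on $\norm{\p^{\alpha-\beta}h_{pp}}_{0}$. The resulting double sum in $\beta$ and $\delta$ is controlled by two successive applications of the standard combinatorial identity
\[
\sum_{j}\frac{N!}{j^{\ell_1}(N-j)^{\ell_2}}\leq CN!\quad(\ell_1,\ell_2\geq 2),
\]
already used in the proofs of Lemma \ref{+stab} and Proposition \ref{propassu}: the inner $\delta$-sum collapses the two $h_q$-factors, and the outer $\beta$-sum absorbs the $h_{pp}$-factor into the single factorial $[(|\alpha|-2)!]^{s}$. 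The boundary ranges $|\beta|\in\{1,2,m-1,m\}$ and the subranges $|\delta|\leq 2$ or $|\beta-\delta|\leq 2$, whose number is bounded uniformly in $m,n$, are handled by the same direct constant estimates and absorbed into the final constant $c_1$.

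The main obstacle is the combinatorial bookkeeping that produces the extra $L_1^{-1}$ saving in the target (beyond the $L_1^{\alpha_1-1}$ one would get from a naive single use of Lemma \ref{+stab}(a)). This saving is exactly the mechanism of Lemma \ref{+stab}(d): because both copies of $h_q$ in $h_q^2$ lie in the strictly stronger class $\mathcal{A}_2$ rather than $\mathcal{A}_1$, the Leibniz sum for $h_q^2$ effectively gains $L_1^{-2}$ rather than $L_1^{-1}$, and one factor $L_1^{-1}$ is then consumed upon pairing with $\p^{\alpha-\beta}h_{pp}$, which behaves as an $\mathcal{A}_1$-class object on multi-indices of total order at most $m-1$, precisely the range allowed by the hypothesis $\beta\neq 0$.
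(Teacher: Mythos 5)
Your overall architecture matches the paper's: establish that $h_q$ lies in the class $\mathcal{A}_2$ of Lemma~\ref{+stab}, control $\norm{\p^\beta h_q^2}_0$ via the Leibniz rule, then split the $\beta$-sum into small/middle/large ranges and sum against the induction bound for $\norm{\p^{\alpha-\beta}h_{pp}}_0$. Your verification that $h_q\in\mathcal{A}_2$ is essentially the paper's estimate~\reff{10}, and your observation that $\beta\neq 0$ keeps $\p^{\alpha-\beta}h$ within the reach of the induction hypothesis is exactly right.

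However, there is a genuine soft spot in your central step, and it concerns the factorial, not the power of $L_1$ that you focus on. Lemma~\ref{+stab}~\reff{conc4} (with $u=v=h_q$, $w=1$) only delivers
\[
\norm{\p^\beta h_q^2}_0\le c_*\,L_1^{\beta_1-2}L_2^{\beta_2}\big[(\abs\beta-2)!\big]^s ,
\]
whereas what the subsequent $\beta$-sum actually requires is the stronger bound with $[(\abs\beta-3)!]^s$ (this is the paper's estimate~\reff{6}). The difference matters: when you pair the $[(\abs\beta-2)!]^s$ version with $\norm{\p^{\alpha-\beta}h_{pp}}_0\le L_1^{\alpha_1-\beta_1-1}L_2^{\alpha_2-\beta_2}[(\abs\alpha-\abs\beta-2)!]^s$, the middle range of the $\beta$-sum produces $\sum_{\beta\le\alpha}\frac{\abs\alpha^2}{\abs\beta^2(\abs\alpha-\abs\beta)^2}$, which, after counting the $O(\min(\abs\beta,\abs\alpha-\abs\beta))$ lattice points at each level, is not uniformly bounded in $m$ (it grows like $\log m$). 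The combinatorial identity you quote, $\sum_j N!\,/\,\big(j^{\ell_1}(N-j)^{\ell_2}\big)\le CN!$, is true but too weak to detect this: the mechanism that is actually used in the paper carries an extra factor $\abs\alpha^2$ in the numerator, as in $\sum_{\beta\le\alpha}\abs\alpha^2/\big(\abs\beta^3(\abs\alpha-\abs\beta)^2\big)\le 8\pi^2$, which requires the exponent $3$ in $\abs\beta^3$. So invoking Lemma~\ref{+stab}~\reff{conc4} as a black box does not close the proof.

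The paper circumvents this by peeling off one derivative by hand before appealing to Lemma~\ref{+stab}: it writes $\p^\beta h_q^2 = 2\,\p^{\tilde\beta}(h_q h_{qp})$ with $\abs{\tilde\beta}=\abs\beta-1$ and then applies part~\reff{conc1} of Lemma~\ref{+stab} (with $h_q\in\mathcal{A}_2$, $h_{qp}\in\mathcal{A}_1$), which yields $[(\abs{\tilde\beta}-2)!]^s=[(\abs\beta-3)!]^s$, precisely the extra power needed. Your direct double-Leibniz expansion \emph{can} also recover this: summing $\sum_\delta\binom{\beta}{\delta}\norm{\p^\delta h_q}_0\norm{\p^{\beta-\delta}h_q}_0$ with both factors estimated via the $\mathcal{A}_2$ bound $[(\cdot-3)!]^s$ and carefully accounting the lattice-point multiplicities does give $[(\abs\beta-3)!]^s$. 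But you would have to actually carry out this sharpening (effectively re-proving a version of Lemma~\ref{+stab} stronger than part~\reff{conc4}), and as written your proposal elides the step; the remark that ``one factor $L_1^{-1}$ is then consumed'' addresses the wrong ledger. In short: correct strategy, but the key quantitative gain in the factorial index is not secured by what you cite.
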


    \begin{proof}[Proof of the lemma]
      We firstly use Lemma \ref{+stab} to  treat the term
      $\norm{\p^\beta h_q^2}_0$ with $3\leq \abs\beta\leq
      \abs\alpha=m$.   To do so, write
      $\beta=\tilde\beta+(\beta-\tilde\beta)$ with
      $|\tilde\beta|=\abs\beta-1\geq 2$.  Without loss of  generality we may
      take $\beta-\tilde\beta=(0,1)$, and the arguments below also
      holds when $\beta-\tilde\beta=(1,0)$.   Thus
  \begin{equation}\label{rel}
  \p^\beta h_q^2=2\p^{\tilde \beta} \inner{h_q h_{qp}}.
\end{equation}
Note that for any
      $\xi=(\xi_1,\xi_2)\leq \tilde\beta$ with
      $|\xi|\geq 3$, we have, using the induction assumption
      \reff{fm1},
\[
  \norm{\p^{\xi} h_{qp}}_{0}\leq \norm{\p^{\xi} h}_{2}\leq L_1^{\xi_1-1}L_2^{\xi_2}
[(|\xi|-2)!]^s,
\]
and
    \begin{equation}\label{10}
      \norm{\p^{\xi} h_q}_{0}\leq
\left\{
\begin{array}{lll}
\norm{\p_q^{\xi_1}\p_p^{\xi_2-1} h}_{2} \leq
L_1^{\xi_1-1} L_2^{\xi_2-1}
[(|\xi|-3)!]^s\leq L_1^{\xi_1-2}L_2^{\xi_2}
[(|\xi|-3)!]^s,\quad &\xi_2\geq 1,\\[3pt]
\norm{\p_q^{\xi_1-1}h}_{2} \leq
L_1^{\xi_1-2}
[(|\xi|-3)!]^s= L_1^{\xi_1-2}L_2^{\xi_2}
[(|\xi|-3)!]^s,\quad &\xi_2=0,
\end{array}
    \right.
    \end{equation}
    where in the case $\xi_2\geq 1$ we used $L_2\geq L_1$.
 Therefore applying Lemma \ref{+stab}-\reff{conc1}, with  $H_1=L_1$, $H_2=L_2$,  $u=h_q$ and
 $v=h_{qp}$,  gives
\[
  \norm{\p^{\tilde \beta} (h_qh_{qp})}_{0}\leq c_4
      L_1^{\tilde \beta_1-1}L_2^{\tilde \beta_2} [(|\tilde
      \beta|-2)!]^s=  c_4
      L_1^{ \beta_1-1}L_2^{\beta_2-1} [(|
      \beta|-3)!]^s\leq  c_4
      L_1^{ \beta_1-2}L_2^{\beta_2} [(|
      \beta|-3)!]^s,
\]
the last inequality holding because $L_2\geq
L_1$. This along with the relation \reff{rel} yields
    \begin{equation}\label{6}
      \forall ~\beta, ~3\leq \abs{\beta}\leq
      \abs\alpha,\quad\norm{\p^{\beta} h^2_q}_{0}\leq 2 c_4
      L_1^{\beta_1-2}L_2^{\beta_2} [(|\beta|-3)!]^s.
    \end{equation}
    On the other hand, for the term
    $\norm{\p^{\alpha-\beta}h_{pp}}_{0}$, we have,  by the induction
  assumption \reff{fm1},
    \begin{equation}\label{7}
      \forall~\beta\leq \alpha,~1\leq\abs{\beta}\leq\abs{\alpha}-2,\quad \norm{\p^{\alpha-\beta}h_{pp}}_{0}\leq
      \norm{\p^{\alpha-\beta}h}_{2}\leq
      L_1^{\alpha_1-\beta_1-1}L_2^{\alpha_2-\beta_2}[(\abs{\alpha}-\abs{\beta}-2)!]^s.
    \end{equation}

    Next we write
    \begin{equation*}
     \begin{split}
      \sum_{\beta\leq \alpha,\beta\neq 0}{\alpha\choose\beta}\norm{\p^\beta h_q^2}_{0}\norm{\p^{\alpha-\beta}h_{pp}}_{0}&=\inner{\sum_{\stackrel{\beta\leq \alpha}{1\leq\abs{\beta}\leq 2}}+\sum_{\stackrel{\beta\leq \alpha}{3\leq\abs{\beta}\leq \abs{\alpha}-2}}+\sum_{\stackrel{\beta\leq \alpha}{\abs{\beta}\geq \abs{\alpha}-1}}}{\alpha\choose\beta}\norm{\p^\beta h_q^2}_{0}\norm{\p^{\alpha-\beta}h_{pp}}_{0}\\
      &=J_1+J_2+J_3.
      \end{split}
    \end{equation*}
    By virtue of \reff{6} and \reff{7},  direct computation as in
    \reff{1},  shows that
    \begin{equation*}
      J_1+J_3\leq c_{5}L_1^{\alpha_1-2}L_2^{\alpha_2}[(\abs{\alpha}-2)!]^s.
    \end{equation*}
 Next for $J_2$, which appears only when $\abs{\alpha}\geq 5$, we have by \reff{6} and \reff{7} that
    \begin{equation*}
      \begin{split}
        J_2&\leq 2c_4\sum_{\stackrel{\beta\leq
            \alpha}{3\leq\abs{\beta}\leq
            \abs{\alpha}-2}}\frac{\abs{\alpha}!}{\abs{\beta}!(\abs{\alpha}-\abs{\beta})!}
             L_1^{\beta_1-2}L_2^{\beta_2} [(|\beta|-3)!]^sL_1^{\alpha_1-\beta_1-1}L_2^{\alpha_2-\beta_2}[(\abs{\alpha}-\abs{\beta}-2)!]^s\\
      &\leq c_{6}\sum_{\stackrel{\beta\leq \alpha}{3\leq\abs{\beta}\leq \abs{\alpha}-2}}\frac{\abs{\alpha}!}{\abs{\beta}^3(\abs{\alpha}-\abs{\beta})^2}
     L_1^{\alpha_1-3}L_2^{\alpha_2} [(|\beta|-3)!]^{s-1}[(\abs{\alpha}-\abs{\beta}-2)!]^{s-1}\\
     &\leq  c_{6}L_1^{\alpha_1-3}L_2^{\alpha_2}\sum_{\stackrel{\beta\leq \alpha}{3\leq\abs{\beta}\leq \abs{\alpha}-2}}\frac{\abs{\alpha}!}{\abs{\beta}^3(\abs{\alpha}-\abs{\beta})^2}
      [(\abs{\alpha}-5)!]^{s-1}\\
      &\leq c_{6}L_1^{\alpha_1-3}L_2^{\alpha_2} [(\abs{\alpha}-2)!]^{s}\sum_{\stackrel{\beta\leq \alpha}{3\leq\abs{\beta}\leq \abs{\alpha}-2}}\frac{\abs{\alpha}^2}{\abs{\beta}^3(\abs{\alpha}-\abs{\beta})^2}\\
      &\leq c_{7}L_1^{\alpha_1-3}L_2^{\alpha_2} [(\abs{\alpha}-2)!]^{s},
      \end{split}
    \end{equation*}
    the last inequality holding because 
\[
\sum_{\stackrel{\beta\leq \alpha}{3\leq\abs{\beta}\leq \abs{\alpha}-2}}\frac{\abs\alpha^2}{\abs\beta^3(\abs\alpha-\abs\beta)^2}
\leq
8\pi^2.
\]   Therefore, choosing $c_1=c_5+c_7$,   we can
    combine the estimates for $J_1, J_2$ and $ J_3$ to complete the proof of the lemma.
    \end{proof}

    \begin{lemma}
  For $\alpha=(\alpha_1,\alpha_2)=(m-n,n)$ with
     $1\leq n\leq m$, we have
      \begin{equation*}
        2\norm{\p^{\alpha}(h_p h_q
          h_{qp})}_{0}+\norm{\p^{\alpha}(h_p^2h_{qq})}_{0}\leq c_2 L_1^{\alpha_1}L_2^{\alpha_2-1} [(\abs{\alpha}-2)!]^{s}.
      \end{equation*}
    \end{lemma}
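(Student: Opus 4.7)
The plan hinges on the algebraic identity
\[
2h_p h_q h_{qp} + h_p^2 h_{qq} = \partial_q(h_p^2 h_q),
\]
obtained by applying the product rule to the right-hand side and using $h_{pq}=h_{qp}$. Consequently the left-hand side of the lemma equals $\|\partial^{(\alpha_1+1,\alpha_2)}(h_p^2 h_q)\|_0$, which collapses both three-fold products into a single derivative of the triple product $h_p^2 h_q$, whose factors $h_p$ and $h_q$ admit the sharpest regularity bounds.

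Next I would verify that $h_p,h_q \in \mathcal{A}_2$ with $(H_1,H_2)=(L_1,L_2)$, using the induction hypothesis \reff{fm1}. For $h_q$ the two-case argument is already recorded in \reff{10}: trade two derivatives to pass to a $\|\cdot\|_2$-norm (choosing $p$-derivatives when $\xi_2\geq 1$, $q$-derivatives when $\xi_2=0$) and invoke $L_2\geq L_1$ to match the $\mathcal{A}_2$-form. The analogue for $h_p=\partial_p h$ proceeds in the same two cases: if $\beta_2\geq 1$, one bounds $\|\partial_q^{\beta_1}\partial_p^{\beta_2+1}h\|_0$ by $\|\partial_q^{\beta_1}\partial_p^{\beta_2-1}h\|_2\leq L_1^{\beta_1-1}L_2^{\beta_2-1}[(|\beta|-3)!]^s$ and uses $L_2\geq L_1$; if $\beta_2=0$, one bounds $\|\partial_q^{\beta_1}\partial_p h\|_0$ by $\|\partial_q^{\beta_1-1}h\|_2$ directly. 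With $h_p,h_p,h_q\in\mathcal{A}_2$, Lemma \ref{+stab}\reff{conc4} then yields
\[
\|\partial^\beta(h_p^2 h_q)\|_0 \leq c_* L_1^{\beta_1-2}L_2^{\beta_2}[(|\beta|-2)!]^s
\]
for all $|\beta|\geq 2$.

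The main technical hurdle is that substituting $\beta=(\alpha_1+1,\alpha_2)$ into this inequality yields only $c_*L_1^{\alpha_1-1}L_2^{\alpha_2}[(|\alpha|-1)!]^s$, which exceeds the target $c_2 L_1^{\alpha_1}L_2^{\alpha_2-1}[(|\alpha|-2)!]^s$ by a factor of $(L_2/L_1)\,(|\alpha|-1)^s$. Closing this gap requires a sharper direct examination of the Leibniz expansion of $\partial^{(\alpha_1+1,\alpha_2)}(h_p^2 h_q)$, partitioning the sum over $(\beta,\gamma,\delta)$ with $\beta+\gamma+\delta=\alpha+(1,0)$ into three ranges (small, intermediate, and large indices), exactly as in the proof of Lemma \ref{lem401}. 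On the intermediate range one exploits the finer bound $\|\partial^\beta h_p\|_0 \leq L_1^{\beta_1-1}L_2^{\beta_2-1}[(|\beta|-3)!]^s$ (valid for $\beta_2\geq 1$ and $|\beta|\geq 3$), which hides the extra factor of $L_1/L_2$; combined with a summation estimate of the form $\sum |\alpha|^2/[k^3(|\alpha|-k)^2]\leq C$ to control the multinomial coefficients, this recovers both the correct factorial $[(|\alpha|-2)!]^s$ and the correct scaling $L_1^{\alpha_1}L_2^{\alpha_2-1}$, with the boundary ranges handled by the crude $\|h\|_{k,\mu}$-bounds as in Lemma \ref{lem401}.
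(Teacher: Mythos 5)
The first step of your plan contains a fatal error. The algebraic identity $2h_ph_qh_{qp}+h_p^2h_{qq}=\p_q(h_p^2h_q)$ is correct, but the conclusion you draw from it---that the left-hand side of the lemma, namely $2\norm{\p^\alpha(h_ph_qh_{qp})}_0+\norm{\p^\alpha(h_p^2h_{qq})}_0$, \emph{equals} $\norm{\p^{(\alpha_1+1,\alpha_2)}(h_p^2h_q)}_0$---is false. The left-hand side of the lemma is a sum of two separate sup-norms, not the norm of the sum, and the identity only yields the triangle inequality
\[
\norm{\p^{(\alpha_1+1,\alpha_2)}(h_p^2h_q)}_0 \leq 2\norm{\p^\alpha(h_ph_qh_{qp})}_0+\norm{\p^\alpha(h_p^2h_{qq})}_0,
\]
which goes the wrong way: an upper bound on the quantity on the left cannot produce an upper bound on the quantity on the right, which is what the lemma asks for. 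The two terms have to be estimated individually, and your entire collapse strategy does not apply to them.

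Setting that aside, the remaining plan would bound a different (and smaller) quantity, and even there it is only a sketch. The paper's proof instead exploits $n\geq 1$ by pulling off a $\p_p$-derivative rather than the $\p_q$-derivative: writing $\alpha=\tilde\alpha+(0,1)$ with $\tilde\alpha=(\alpha_1,\alpha_2-1)$, one has
\[
\p^{\alpha}(h_p h_q h_{qp})=\p^{\tilde\alpha}\inner{h_{pp} h_q h_{qp}+h_p h_{qp} h_{qp}+h_p h_q h_{qpp}},
\]
and analogously for $\p^\alpha(h_p^2h_{qq})$. Each of the resulting triple products is then estimated at level $\tilde\alpha$ using parts \reff{conc1}, \reff{conc2}, \reff{conc3} of Lemma \ref{+stab}; since $\abs{\tilde\alpha}=\abs\alpha-1$ and $\tilde\alpha_2=\alpha_2-1$, these estimates supply at once the power $L_2^{\alpha_2-1}$ and the factorial $[(\abs\alpha-2)!]^s$. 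Your $\p_q$-based reduction raises $\alpha_1$ instead of lowering $\alpha_2$, which is exactly why you are off by $(L_2/L_1)(\abs\alpha-1)^s$; the proposed ``sharper Leibniz examination'' that is supposed to close this is not carried out, and it is not evident how it would recover the missing factorial without essentially re-deriving the $\p_p$-splitting.
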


    \begin{proof}[Proof of the lemma]
    Since $n\geq 1$, we can write $\alpha=\tilde\alpha+(0,1)$ with $\tilde \alpha=(\tilde\alpha_1,\tilde\alpha_2)=(m-n,n-1)$.
    Thus
    \begin{equation}\label{d}
      \p^{\alpha}(h_p h_q h_{qp})=\p^{\tilde\alpha}\inner{h_{pp} h_q h_{qp}+h_p h_{qp} h_{qp}+h_p h_q h_{qpp}}.
    \end{equation}
    We next compute the estimate for the term $\p^{\tilde\alpha}(h_{pp} h_q h_{qp})$.
      For any $\beta\leq \tilde\alpha$ with $\abs{\beta}\geq 3$,  we
      have, as for $\norm{\p^{\xi} h_q}_{0}$ in \reff{10},
      \begin{equation*}
        \norm{\p^\beta h_q}_0\leq L_1^{\beta_1-2}L_2^{\beta_2}[(\abs{\beta}-3)!]^s,\qquad \norm{\p^\beta h_p}_0\leq L_1^{\beta_1-2}L_2^{\beta_2}[(\abs{\beta}-3)!]^s,
      \end{equation*}
      and by  the induction assumption  \reff{fm1} and \reff{fn1}, in view of
      $\beta_2\leq \tilde\alpha_2= n-1$,
\begin{eqnarray*}
\norm{\p^\beta h_{pp}}_0&\leq&\norm{\p^\beta h}_2\leq
L_1^{\beta_1-1}L_2^{\beta_2}[(\abs{\beta}-2)!]^s,\\
\norm{\p^\beta h_{qp}}_0&\leq &\norm{\p^\beta h}_2\leq
L_1^{\beta_1-1}L_2^{\beta_2}[(\abs{\beta}-2)!]^s,\\
\norm{\p^\beta h_{qpp}}_0&\leq &\norm{\p_q^{\beta_1+1}\p_p^{\beta_2}
  h}_2 \leq L_1^{\beta_1}L_2^{\beta_2}[(\abs{\beta}-1)!]^s.
\end{eqnarray*}
Thus   we obtain,
using Lemma
  \ref{+stab}-\reff{conc1} with $u=h_{p}$, $w=h_{p}$ and $v=h_{qp}$,
\begin{eqnarray*}\label{a}
        \norm{\p^{\tilde\alpha}(h_{p} h_q h_{qp})}_0\leq c_8 L_1^{\tilde\alpha_1-1}L_2^{\tilde\alpha_2}[(\abs{\tilde\alpha}-2)!]^s=c_8 L_1^{\alpha_1-1}L_2^{\alpha_2-1}[(\abs{\alpha}-3)!]^s
        \leq c_8 L_1^{\alpha_1}L_2^{\alpha_2-1}[(\abs{\alpha}-2)!]^s.
      \end{eqnarray*}
Similarly,  using Lemma
  \ref{+stab}-\reff{conc2} with $u=h_{p}$, $v=w=h_{qp}$, gives
 \begin{eqnarray*}\label{a}
        \norm{\p^{\tilde\alpha}(h_{p} h_{qp} h_{qp})}_0\leq c_9 L_1^{\tilde\alpha_1}L_2^{\tilde\alpha_2}[(\abs{\tilde\alpha}-1)!]^s
        =c_9 L_1^{\alpha_1}L_2^{\alpha_2-1}[(\abs{\alpha}-2)!]^s,
      \end{eqnarray*}
while  using Lemma
  \ref{+stab}-\reff{conc3} with $u=h_{p}$, $v=h_{q}$ and $w=h_{qpp}$ gives
 \begin{eqnarray*}\label{a}
       \norm{\p^{\tilde\alpha}(h_{p} h_{p} h_{qpp})}_0\leq c_{10} L_1^{\tilde\alpha_1}L_2^{\tilde\alpha_2}[(\abs{\tilde\alpha}-1)!]^s
        =c_{10} L_1^{\alpha_1}L_2^{\alpha_2-1}[(\abs{\alpha}-2)!]^s.
      \end{eqnarray*}
Combining the above inequalities, we have,  in view of \reff{d},
      \begin{eqnarray*}
       2 \norm{\p^{\alpha}(h_p h_q h_{qp})}_0\leq 2(c_8+c_9+c_{10}) L_1^{\alpha_1}L_2^{\alpha_2-1}[(\abs{\alpha}-2)!]^s.
      \end{eqnarray*}
   The treatment for the term $\norm{\p^{\alpha}(h_p^2h_{qq})}_{0}$ is
   completely the same as above, so we have
      \begin{eqnarray*}
        \norm{\p^{\alpha}(h_p^2h_{qq})}_{0}\leq c_{11} L_1^{\alpha_1}L_2^{\alpha_2-1}[(\abs{\alpha}-2)!]^s.
      \end{eqnarray*}
      Combining the above two estimates, we choose
      $c_2=2(c_8+c_9+c_{10})+c_{11}$ to complete the proof of the lemma.
    \end{proof}

      \begin{lemma}\label{lem403}
     Let $\gamma\in G^s([p_0,0])$. We have, for $\alpha=(\alpha_1,\alpha_2)=(m-n,n)$ with
     $1\leq n\leq m$,
 \begin{equation*}
        \norm{\p^{\alpha}\inner{\gamma h_p^3}}_{0}\leq c_3 L_1^{\alpha_1-2}L_2^{\alpha_2} [(\abs{\alpha}-2)!]^{s}.
      \end{equation*}
    \end{lemma}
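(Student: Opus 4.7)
The plan is to exploit that $\gamma$ depends only on $p$, so Leibniz yields
\[
\partial^\alpha(\gamma h_p^3) = \sum_{k=0}^{n}\binom{n}{k}(\partial_p^k\gamma)\,\partial^{(\alpha_1,\,\alpha_2-k)}(h_p^3).
\]
The Gevrey hypothesis \reff{GevCon} bounds $\|\partial_p^k\gamma\|_0$ by $M^{k+1}(k!)^s$, and I plan to bound $\partial^{(\alpha_1,\,\alpha_2-k)}(h_p^3)$ by applying part \reff{conc4} of Lemma \ref{+stab} with $u=v=w=h_p$, which is exactly the conclusion tailored to a product of three factors lying in $\mathcal{A}_2$.

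The preparatory step is to verify that $h_p\in\mathcal{A}_2$ with $H_1=L_1$, $H_2=L_2$ and $\alpha_0=\alpha$. For $\xi=(\xi_1,\xi_2)\le\alpha$ with $|\xi|\ge 3$, the identities $\partial^\xi h_p=\partial_q^{\xi_1}\partial_p^{\xi_2-1}h_{pp}$ (when $\xi_2\ge 1$) and $\partial^\xi h_p=\partial_q^{\xi_1-1}h_{qp}$ (when $\xi_2=0$) give $\|\partial^\xi h_p\|_0\le \|\partial^\beta h\|_2$ for a suitable $\beta\le\alpha$ with $|\beta|=|\xi|-1\le m-1$, so the induction assumption \reff{fm1} yields the required bound once the exponent difference is absorbed via $L_2\ge L_1$, just as in \reff{10}. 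Lemma \ref{+stab}-\reff{conc4} consequently gives, for $0\le k\le m-2$,
\[
\|\partial^{(\alpha_1,\,\alpha_2-k)}(h_p^3)\|_0\le c_*L_1^{\alpha_1-2}L_2^{\alpha_2-k}[(m-k-2)!]^s.
\]
For the boundary values $k\in\{m-1,m\}$, which arise only when $n\ge m-1$, the factor $\partial^\xi(h_p^3)$ has $|\xi|\le 1$ and is controlled directly in terms of $\|h\|_2$ and a harmless constant.

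Combining these ingredients reduces the lemma to the combinatorial estimate
\[
M\sum_{k=0}^{m-2}\binom{n}{k}\Bigl(\frac{M}{L_2}\Bigr)^k (k!)^s[(m-k-2)!]^s \le \tilde c\,[(m-2)!]^s
\]
for a constant $\tilde c$ independent of $m$ and $n$. Using the identity $k!(m-k-2)!=(m-2)!/\binom{m-2}{k}$ for $k\le m-2$, each summand equals $\binom{n}{k}\binom{m-2}{k}^{-s}(M/L_2)^k[(m-2)!]^s$. I would split the sum at $k=m/2$: for $k\le m/2$, bound $\binom{n}{k}/\binom{m-2}{k}^s\le m(m-1)/[(m-k)(m-k-1)]=O(1)$; for $k>m/2$, use $\binom{n}{k}\le 2^m$ against the geometric factor $(M/L_2)^k$, which is summable provided $L_2$ is large enough compared to $M$ (say $L_2\ge 4M$).

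The main obstacle is precisely this combinatorial bookkeeping near $k=m-2$, where the naive ratio $\binom{m}{k}/\binom{m-2}{k}$ already grows like $m^2$; guaranteeing that the geometric decay $(M/L_2)^k$ absorbs it uniformly in $m$ forces the extra lower bound $L_2\gtrsim M$. This is fully compatible with the lower bounds on $L_1,L_2$ in \reff{l1l21}--\reff{l1l23}, and once it is imposed, $c_3$ emerges as a harmless multiple of $M$, $c_*$, $\|h\|_2$, and a geometric-series constant, completing the proof in the same spirit as Lemma \ref{lem401} and the preceding lemma.
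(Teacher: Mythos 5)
Your proof is correct and uses the same core ingredients as the paper's: verify $h_p\in\mathcal{A}_2$, invoke Lemma \ref{+stab}--\reff{conc4} to control $\p^\beta(h_p^3)$, exploit $\p_q\gamma\equiv 0$ with the Gevrey bound \reff{GevCon}, and close with a Leibniz sum that forces $L_2\gtrsim M$. Where you diverge is the combinatorial closing step: the paper normalizes the Gevrey bound so that (after choosing $L_2\ge L_1\tilde M$ in \reff{l1l24}) $\gamma$ fits into the same class framework as the other factors, and then simply invokes the $J_1$--$J_3$ decomposition of Lemma \ref{lem401} with its estimate $\sum \abs\alpha^2/(\abs\beta^3(\abs\alpha-\abs\beta)^2)\le 8\pi^2$, whereas you keep $\norm{\p_p^k\gamma}_0\le M^{k+1}(k!)^s$ in raw form, use the identity $k!(m-k-2)!=(m-2)!/\binom{m-2}{k}$, and split at $k=m/2$ with a geometric-decay argument in $(M/L_2)^k$. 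Both routes land on the same flavor of constraint on $L_2$. Two small points of rigor in your version: for $k>m/2$, phrase the tail bound as $\sum_{k>m/2}(M/L_2)^k\le (M/L_2)^{m/2}(1-M/L_2)^{-1}$ and then multiply by $2^m$, rather than summing termwise (the latter gives $O(m)$, not $O(1)$); and for the boundary cases $k\in\set{m-1,m}$, the factor $M^{k+1}(k!)^s$ still has to be absorbed against $L_1^{\alpha_1-2}L_2^{\alpha_2}[(m-2)!]^s$ — this works because $\alpha_2=n\ge k$ and $L_2\ge 4M$, but it deserves the explicit one-line check rather than being waved through as a "harmless constant."
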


\begin{proof}[Proof of the lemma]
As for $\norm{\p^{\xi} h_q}_{0}$ in \reff{10},   we have by induction
      \begin{equation*}
        \forall~  \beta\leq \alpha,~\abs{\beta}\geq 3,\quad \norm{\p^{\beta} h_p}_0\leq L_1^{\beta_1-2}L_2^{\beta_2}\com{(|\beta|-3)!}^s.
      \end{equation*}
Thus using Lemma \ref{+stab}-\reff{conc4} with $H_i=L_i, i=1,2$,
$u=v=w=h_p$,  we deduce that $c_*^{-1}L_1 h_p^3\in\mathcal A_1$, that is,
      \begin{equation}\label{betahp3}
        \forall~\beta\leq \alpha,~\abs{\beta}\geq 2,\quad
        \norm{\p^\beta \inner{ h_p^3}}_0 \leq c_* L_1^{\beta_1-2}L_2^{\beta_2}[(\abs{\beta}-2)!]^s.
      \end{equation}
On the other hand, since  $\gamma(p)\in G^s([p_0, 0])$,  then
using  \reff{GevCon}
 gives
\begin{eqnarray*}
   \forall~\abs{\beta}\geq 3,\quad \ \norm{\p^\beta \gamma}_0\leq
\left\{
\begin{array}{lll}
0,&\quad  \beta_1\geq 1,\\
 \norm{\p_p^{\beta_2} \gamma}_0 \leq M^{\beta_2+1}(\beta_2!)^s \leq
 \tilde M^{\beta_2}[(\beta_2-3)!]^s,
 &\quad \beta_1=0,
\end{array}
\right.
\end{eqnarray*}
where $\tilde M$ in the last inequality is a constant
depending only on  $M$ and $s$.  Thus if we choose $L_1, L_2$ in such a
way that
\begin{equation}\label{l1l24}
L_2\geq L_1 \tilde M,
\end{equation}
then we have
 \begin{equation}\label{gambeta3}
 \forall~\beta\leq \alpha,~\abs{\beta}\geq 3,\quad\norm{\p^\beta \gamma}_0\leq  L_1^{\beta_1-2}L_2^{\beta_2}[(\abs{\beta}-3)!]^s.
\end{equation}
Now we write
\begin{eqnarray*}
 \norm{\p^{ \alpha}( \gamma h_p^3)}_0\leq \sum_{\abs\beta\leq
   \abs\alpha}\frac{\abs\alpha !}{\abs\beta!(\abs\alpha-\abs\beta)!}\norm{\p^{ \beta}\gamma }_0 \norm{\p^{ \alpha-\beta}(  h_p^3)}_0.
\end{eqnarray*}
This together with \reff{betahp3} and \reff{gambeta3} allows us to  argue as the
treatment of $J_1-J_3$ in Lemma \ref{lem401},  to conclude
\begin{eqnarray*}
 \norm{\p^{ \alpha}( \gamma h_p^3)}_0\leq c_{12}
 L_1^{\alpha_1-2}L_2^{\alpha_2}[(|\alpha|-2)!]^s.
\end{eqnarray*}
Thus the desired estimate follows  if choosing $c_3=c_{12}$.  The proof
is thus complete.
\end{proof}

We now continue the proof of Proposition \ref{prophpq}.  Combining \reff{5}
and the conclusions in the previous three lemmas, Lemma \ref{lem401}-Lemma
\ref{lem403},  we get
\begin{eqnarray*}
  \begin{split}
   \norm{\p_q^{m-n}\p_p^{n+2} h}_{0}&\leq \inner{(c_1+c_3)L_1^{-1}+c_2L_1L_2^{-1}}L_1^{\alpha_1-1}L_2^{\alpha_2} [(\abs{\alpha}-2)!]^{s}\\
   &\leq \frac{1}{2}L_1^{\alpha_1-1}L_2^{\alpha_2} [(\abs{\alpha}-2)!]^{s},
   \end{split}
\end{eqnarray*}
where in the last inequality we chose
\begin{equation}\label{Lgeq}
L_1\geq 4(c_1+c_3) , \quad L_2\geq 4c_2L_1.
\end{equation}
Then
we get the desired estimate \reff{+hqmn},  and thus the validity of
$(F_{m,n})$ and $(F_{m})$.
Summarizing  the relations \reff{l1l21}, \reff{l1l22},
\reff{l1l23}, \reff{l1l24} and \reff{Lgeq},  we can choose
\begin{eqnarray}\label{L1 L2}
  L_1\geq \max\Big\{L, \norm{h}_4+1,  4(c_1+c_3)\Big\} ~~ {\rm and}~~ L_2\geq \inner{8+4c_2+\tilde M}L_1,
\end{eqnarray}
with $\tilde M$ the  constant appearing in \reff{l1l24},
to complete the proof of Proposition \ref{prophpq}.
\end{proof}

The rest of this section is devoted to
\begin{proof}[Proof of Lemma \ref{+stab}]  To  simplify
the notations, we   use $a_j,j\geq 1,$ to denote different
suitable harmless constants larger than $1$,   which depend only on
the dimension,  but are independent of  the order $\alpha_0$ of
derivative.

\reff{conc1}
Assume $u\in\mathcal A_2$ and $v\in\mathcal A_1$.  By Leibniz formula we have, for any  $\alpha\leq \alpha_0$ with
$\abs\alpha\geq 2$,
\begin{eqnarray*}
  \p^\alpha\inner{uv}=\sum_{0\leq \beta\leq\alpha} {\alpha\choose \beta}
   \inner{ \p^{\beta}u}   \inner{ \p^{\alpha-\beta}v}.
\end{eqnarray*}
Then
\begin{eqnarray*}
  \norm{\p^\alpha\inner{uv}} \leq  \sum_{0\leq\beta\leq\alpha} \frac{\abs\alpha!}{\abs\beta !\abs{\alpha-\beta}!}
   \norm{ \p^{\beta}u}_0 \norm{
       \p^{\alpha-\beta}v}_0
= I_1+I_2+I_3,
\end{eqnarray*}
with
\begin{eqnarray*}
       I_1&=&  \sum_{\stackrel{0\leq\beta\leq\alpha}{\abs\beta\leq 2}}  \frac{\abs\alpha!}{\abs\beta !\abs{\alpha-\beta}!}
   \norm{\p^{\beta}u}_0 \norm{
       \p^{\alpha-\beta}v}_0,\\
   I_2&=& \sum_{\stackrel{0\leq\beta\leq\alpha}{3\leq \abs\beta\leq \abs\alpha-2}}  \frac{\abs\alpha!}{\abs\beta !\abs{\alpha-\beta}!}
   \norm{ \p^{\beta}u}_0 \norm{
       \p^{\alpha-\beta}v}_0,\\
I_3&=&  \sum_{\stackrel{0\leq\beta\leq\alpha}{\abs\beta\geq\abs\alpha-1}}  \frac{\abs\alpha!}{\abs\beta !\abs{\alpha-\beta}!}
   \norm{ \p^{\beta}u}_0 \norm{
       \p^{\alpha-\beta}v}_0.
\end{eqnarray*}
Since $H_2\geq H_1$, direct computation shows that there exists $a_1>1$ such that
   \begin{eqnarray*}
    I_1+I_3
    \leq a_1 \inner{\norm{u}_2+\norm{v}_1+1}^2 H_1^{\alpha_1-1}H_2^{\alpha_2}[(\abs\alpha-2)!]^s.
   \end{eqnarray*}
For $I_2$, which appears only when $\abs{\alpha}\geq 5$, we have
\begin{eqnarray*}
    I_2&\leq&  \sum_{\stackrel{0\leq\beta\leq\alpha}{3\leq \abs\beta\leq \abs\alpha-2}}\frac{\abs\alpha!}{\abs\beta !\abs{\alpha-\beta}!}
     H_1^{\beta_1-2}H_2^{\beta_2}\inner{(\abs\beta-3)!}^sH_1^{\alpha_1-\beta_1-1}H_2^{\alpha_2-\beta_2}\com{(\abs{\alpha}- \abs{\beta}-2)!}^s\\
  &\leq& a_2 \sum_{\stackrel{0\leq\beta\leq\alpha}{3\leq \abs\beta\leq \abs\alpha-2}}\frac{\abs\alpha!}{\abs\beta^{3} \abs{\alpha-\beta}^{2}}
     H_1^{\alpha_1-3}H_2^{\alpha_2}\com{(\abs\beta-3)!}^{s-1}\com{(\abs{\alpha}- \abs{\beta}-2)!}^{s-1}\\
  &\leq& a_2 H_1^{\alpha_1-3}H_2^{\alpha_2} \sum_{\stackrel{0\leq\beta\leq\alpha}{3\leq \abs\beta\leq \abs\alpha-2}} \frac{\abs\alpha!}{\abs\beta^{3} \abs{\alpha-\beta}^{2}}
     \com{(\abs{\alpha}-5)!}^{s-1}\\
  &\leq& a_2H_1^{\alpha_1-1}H_2^{\alpha_2}\com{(\abs\alpha-2)!}^{s} \sum_{\stackrel{0\leq\beta\leq\alpha}{3\leq \abs\beta\leq \abs\alpha-2}} \frac{\abs\alpha^{2}}{ \abs\beta^{3}\inner{\abs\alpha-\abs\beta}^{2}}\\
  &\leq& a_3 H_1^{\alpha_1-1}H_2^{\alpha_2}\com{(\abs\alpha-2)!}^{s}.
\end{eqnarray*}
In view of the
estimates for $I_1,I_2$ and $I_3$, we have,  for any  $\alpha\leq
\alpha_0$  with $\abs\alpha\geq 2$,
\begin{equation}\label{uvnorm}
  \norm{\p^{\alpha}(uv)}_0\leq a_4\inner{\norm{u}_2+\norm{v}_1+1}^2 H_1^{\alpha_1-1}H_2^{\alpha_2} \com{(\abs\alpha-2)!}^{s}
\end{equation}
by choosing $a_4= (a_1+ a_3)$.

If additionally $w\in\mathcal A_2$,  then applying  the above
arguments to the two functions $w$   and 
$$a_4^{-1}\inner{\norm{u}_2+\norm{v}_1+1}^{-2}uv$$ 
which lies in $\mathcal A_1$ due to
\reff{uvnorm}, gives
\begin{eqnarray*}
  \begin{split}
    \norm{\p^{\alpha}(uv w)}_0&\leq a_4^2\inner{\norm{u}_2+\norm{v}_1+1}^2\inner{\norm{w}_2+\norm{uv}_1+1}^2 H_1^{\alpha_1-1}H_2^{\alpha_2} \com{(\abs\alpha-2)!}^{s}\\
    &\leq a_4^2\inner{\norm{u}_2+\norm{v}_1+\norm{w}_2+1}^6 H_1^{\alpha_1-1}H_2^{\alpha_2} \com{(\abs\alpha-2)!}^{s}.
  \end{split}
\end{eqnarray*}
Thus the conclusion \reff{conc1} follows if
we choose $c_*\geq a_4^2\inner{\norm{u}_2+\norm{v}_1+\norm{w}_2+1}^6$.

\reff{conc2} Now assume $u\in \mathcal A_2$ and $v,w\in\mathcal A_1$.
Firstly note from \reff{uvnorm} that $$a_4^{-1}\inner{\norm{u}_2+\norm{v}_1+1}^{-2}uv\in\mathcal{A}_1.$$
We can use the same arguments as above to the two functions
$a_4^{-1}\inner{\norm{u}_2+\norm{v}_1+1}^{-2}uv$ and $w$; this gives, for any $\alpha\leq \alpha_0$ with
$\abs\alpha\geq 1,$
\begin{eqnarray*}
    a_4^{-1}\inner{\norm{u}_2+\norm{v}_1+1}^{-2}\norm{\p^{\alpha}\inner{uv w}}_0
  &\leq& a_5\inner{\norm{uv}_1+\norm{w}_1+1}^2 H_1^{\alpha_1}H_2^{\alpha_2} \com{(\abs\alpha-1)!}^{s}\\
  &&+a_5 H_1^{\alpha_1-1}H_2^{\alpha_2}\com{(\abs\alpha-2)!}^{s} \sum_{\stackrel{0\leq\beta\leq\alpha}{2\leq \abs\beta\leq \abs\alpha-2}} \frac{\abs\alpha^{2}}{ \abs\beta^{2}\inner{\abs\alpha-\abs\beta}^{2}}\\
  &\leq& a_6\inner{\norm{uv}_1+\norm{w}_1+1}^2 H_1^{\alpha_1}H_2^{\alpha_2} \com{(\abs\alpha-1)!}^{s},
\end{eqnarray*}
where the last inequality using the estimate 
\[
\sum_{\stackrel{0\leq\beta\leq\alpha}{2\leq \abs\beta\leq
    \abs\alpha-2}} \frac{\abs\alpha^{2}}{
  \abs\beta^{2}\inner{\abs\alpha-\abs\beta}^{2}}\leq 8\pi^2.
\]
 Accordingly,
\begin{eqnarray*}
  \norm{\p^{\alpha}\inner{uv w}}_0\leq a_4a_6\inner{\norm{u}_2+\norm{v}_1+\norm{w}_1+1}^{6}H_1^{\alpha_1}H_2^{\alpha_2} \com{(\abs\alpha-1)!}^{s}.
\end{eqnarray*}
Thus the
conclusion follows if we choose $c_* \geq a_4a_6\inner{\norm{u}_2+\norm{v}_1+\norm{w}_1+1}^{6}$.

\reff{conc3} Now consider the case when  $u, v\in \mathcal A_2$ and $w\in\mathcal A_0$.
Similarly we can first use the same arguments as in
\reff{conc1},   to
obtain $$a_{7}^{-1}\inner{\norm{u}_2+\norm{w}_0+1}^{-2} uw\in\mathcal{A}_0,$$ and then  repeat the
arguments to the two functions $a_{7}^{-1}\inner{\norm{u}_2+\norm{w}_0+1}^{-2} uw$ and $v$ to conclude
\[
a_{7}^{-1}a_{8}^{-1}\inner{\norm{u}_2+
\norm{v}_2+\norm{w}_0+1}^{-6} uvw\in\mathcal{A}_0.
\]
 The conclusion \reff{conc3} follows
by choosing $c_*\geq a_{7}a_{8}\inner{\norm{u}_2+
\norm{v}_2+\norm{w}_0+1}^{6}$.

\reff{conc4}
Assume $u,v,w\in\mathcal{A}_2$. Similarly, we can first argue as in \reff{conc1} to show that
\begin{eqnarray*}
 \forall~\abs{\alpha}\geq 2,\quad \norm{\p^\alpha (uv)}_0\leq a_9\inner{\norm{u}_2+\norm{v}_2+1}^2 H_1^{\alpha_1-2}H_2^{\alpha_2}[(\abs{\alpha}-2)!]^s,
\end{eqnarray*}
and then repeat the arguments to $a_9^{-1}\inner{\norm{u}_2+\norm{v}_2+1}^{-2}uv$ and $w$ to derive
\begin{eqnarray*}
  \forall~\abs{\alpha}\geq 2,\quad \norm{\p^\alpha (uvw)}_0\leq a_9a_{10}\inner{\norm{u}_2+\norm{v}_2+\norm{w}_2+1}^6 H_1^{\alpha_1-2}H_2^{\alpha_2}[(\abs{\alpha}-2)!]^s.
\end{eqnarray*}
Thus the conclusion \reff{conc4} follows by choosing $c_*\geq a_9a_{10}\inner{\norm{u}_2+\norm{v}_2+\norm{w}_2+1}^6$.

Finally, the conclusion of Lemma \ref{+stab}  follows by choosing
\[
  c_*\geq (a_4^2+a_4a_6+a_7a_8+a_9a_{10})\inner{\norm{u}_2+\norm{v}_2+\norm{w}_2+1}^6
\]
with $a_j$ the constants depending only on the dimension.
The proof is thus complete.
\end{proof}

\section{Regularity of water waves with surface tension}\label{sec5}
Adding the effects of surface tension in the free boundary problem \reff{EquPsi1}-\reff{EquPsi4} introduces higher-order derivative into the boundary condition. That is the equation \reff{EquPsi2} is replaced by
\begin{equation}\label{EquPsi22}
  \abs{\nabla
    \psi}^2+2g(y+d)-2\sigma\frac{\eta_{xx}}{(1+\eta_x^2)^{\frac{3}{2}}}=Q,\qquad
  y=\eta(x),  \tag{3b$'$}
  \end{equation}
where $\sigma>0$ is the coefficient of surface tension. Correspondingly, the equation \reff{Equh2} becomes
\begin{equation}\label{Equh22}
  1+h_q^2+(2gh-Q)h_p^2-2\sigma\frac{h_p^2h_{qq}}{(1+h_q^2)^{\frac{3}{2}}}=0,\quad
  {\rm on~~} p=0. \tag{4b$'$}
\end{equation}

Proven in this section is the regularity property of all
the streamlines and stream function of water waves with surface
tension.

\begin{theorem}\label{th3}
   Consider the free boundary problem \reff{EquPsi1}-\reff{EquPsi4} with
   \reff{EquPsi2} replaced by  above \reff{EquPsi22}.  Suppose $\gamma\in
   C^{0,\mu}([p_0,0])$ with $\mu$ and $p_0$ given. Then
each streamline including the free surface $y=\eta(x)$ is a real-analytic curve.   If, in addition,
$\gamma \in G^s([p_0, 0])$ with $s\geq 1$,   then $\psi(x,y) \in
G^s(\bar \Omega )$; in particular if $s=1$, i.e., $\gamma$ is analytic in $[p_0,
0]$, then the stream function $\psi(x,y)$ is analytic in $\bar\Omega$.
\end{theorem}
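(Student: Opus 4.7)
The plan is to follow the same two-step strategy used for the gravity case, Theorems \ref{th1} and \ref{th2}. The partial hodograph transformation still converts the free-boundary problem into a fixed-strip problem, with only the upper boundary condition changed: \reff{Equh1} and \reff{Equh3} are unchanged, while \reff{Equh2} is replaced by \reff{Equh22}. The proof of Proposition \ref{reserv} uses only the qualitative smoothness of $h$ and the positivity of $h_p$, both of which continue to hold; hence it suffices to establish capillary-gravity analogs of Propositions \ref{propassu} and \ref{prophpq} concerning the height function $h$ on $\bar R$.

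The induction structure is identical to that of Sections \ref{sec3} and \ref{sec4}. Differentiating \reff{Equh1} and \reff{Equh22} $m$ times in $q$ (or applying $\partial^\alpha$ in the Gevrey setting), one isolates the top-order contributions of $\partial_q^m h$ on the left to recover the same elliptic operator $A(h)$ as in \reff{oprtAB}, together with a modified boundary operator $\tilde B(h)$ at $p=0$. The surface-tension term contributes, on the one hand, a second-order tangential piece $-\frac{2\sigma h_p^2}{(1+h_q^2)^{3/2}}\phi_{qq}$ in $\tilde B(h)$ acting on $\phi=\partial_q^m h$, and on the other hand a Leibniz remainder $\tilde\varphi_3 = 2\sigma\sum_{n=1}^{m-1}\binom{m}{n}\partial_q^n\bigl[h_p^2/(1+h_q^2)^{3/2}\bigr]\,\partial_q^{m-n}h_{qq}$ added to the right-hand side. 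Because $\partial_q^{m-n}h_{qq}$ involves at most $m-n+2\leq m+1$ $q$-derivatives of $h$ and is coupled to $n\geq 1$ derivatives of a $C^{2,\mu}$-coefficient, these new terms can be estimated by the same product inequalities used for $f_1,f_2,\varphi_1,\varphi_2$: Lemma \ref{stab} (respectively Lemma \ref{+stab}) together with the inductive hypothesis up to order $m-1$ yields bounds of the same shape $C L^{m-2}(m-2)!$ or its Gevrey counterpart.

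The genuinely new obstacle is that $\tilde B(h)$ is no longer a first-order oblique operator as in \reff{oprtAB}, but carries a second-order tangential derivative (a Venttsel'-type boundary condition), so the Schauder estimate \reff{par m h} from \cite[Theorem 6.30]{MR1814364} does not apply directly. One remedies this either by invoking the Agmon--Douglis--Nirenberg Schauder estimate for elliptic problems satisfying the Lopatinskii--Shapiro complementing condition, which here reduces to the uniform nondegeneracy of the coefficient $-2\sigma h_p^2/(1+h_q^2)^{3/2}$ of $\phi_{qq}$ (ensured by \reff{hp bounded} and the a priori $C^{2,\mu}$-bound on $h$), or by solving \reff{Equh22} algebraically for $h_{qq}|_{p=0}$ and recasting the resulting relation as a standard oblique boundary condition whose data are controlled by $h$, $h_q$, $h_p$ on $p=0$. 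Either route produces an analog of \reff{par m h} with constants depending only on the a priori data, after which the remaining combinatorial induction in $m$ (and, for the Gevrey statement, also in $n=\alpha_2$ as in \reff{fmn}, using \reff{GevCon} to control $\partial_p^k\gamma$) proceeds verbatim. Combined with Proposition \ref{reserv}, this yields both the analyticity of the streamlines and the Gevrey regularity of $\psi$ on $\bar\Omega$.
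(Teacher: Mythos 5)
Your proposal correctly identifies the paper's strategy: reduce via the hodograph transform to the fixed-strip height-function problem, differentiate the modified boundary condition \reff{Equh22}, observe that the principal part of the boundary operator becomes the second-order tangential piece $\tilde B(h)=2\sigma h_p^2(1+h_q^2)^{-3/2}\p_q^2$ (a Venttsel'-type condition), and replace the oblique-derivative Schauder estimate by the Agmon--Douglis--Nirenberg estimate, valid because the complementing condition holds as verified in \cite{HenryJmfm}. Your observation that the Gevrey part (Proposition \ref{prophpq}) carries over with no change is also right, since the step raising $\alpha_2$ in \reff{fmn} uses only the interior equation \reff{Equh1} and never touches the boundary condition.

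The one substantive point your sketch elides is the estimation of the Leibniz remainder coming from the surface-tension coefficient. You assert that the same product inequalities, i.e.\ Lemma \ref{stab} (resp.\ Lemma \ref{+stab}), suffice. But Lemma \ref{stab} bounds derivatives of triple \emph{products} $u_1u_2u_3$; the coefficient here involves the negative fractional power $(1+h_q^2)^{-3/2}$, and propagating factorial bounds from $\p_q^k h_q$ to $\p_q^k\inner{(1+h_q^2)^{-3/2}}$ is not a direct application of that lemma. The paper closes this gap with an auxiliary recursive argument (Lemma \ref{+stab+}): writing
\[
  \p_q^k\inner{(1+u^2)^{-1}}=-\p_q^{k-1}\inner{(1+u^2)^{-1}(1+u^2)^{-1}\p_q(u^2)}
\]
and the analogous identity for the power $-3/2$, one runs a nested induction in $k$ to show that both $(1+h_q^2)^{-1}$ and $(1+h_q^2)^{-3/2}$ inherit bounds of the shape $C\tilde L^{k-2}(k-3)!$ from the already-known bounds on $\p_q^k(h_q^2)$. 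With that lemma supplied, your argument matches the paper's.
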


\begin{proof}

As before we only prove the corresponding regularity for height
function $h$ of the system \reff{Equh1}-\reff{Equh3} with \reff{Equh2}
replaced by above \reff{Equh22}.   Since the arguments are nearly the same as those in the absence of surface tension (Section
\ref{sec3} and Section \ref{sec4}),  we shall only give a sketch and indicate how to modify the analysis as
adding the higher-order derivative due to surface tension.

Repeating the arguments in Section \ref{sec4},  we can derive  the
second statement in Theorem \ref{th3},  without any difference. So we
only need to prove the first statement on the analyticity of streamlines,
where the main difference from Section \ref{sec3} occurs.  As in
Remark \ref{remReg} we may assume $\p_q^k h\in C^{2,\mu}(\bar R)$ for
any $k\in\N$.   Taking $m^{th}$-order derivative with respect to the $q$-variable on both sides of the equation \reff{Equh22} shows that the second equation in \reff{oprtAB} becomes
\begin{eqnarray*}
  \tilde B(h)[\p_q^m h]=\tilde\varphi_1+\tilde\varphi_2,\quad {\rm on}~ p=0,
\end{eqnarray*}
with the operator
\begin{eqnarray*}
  \tilde B(h)=2\sigma\frac{h_p^2}{(1+h_q^2)^{\frac{3}{2}}}\p_q^2
\end{eqnarray*}
and the right-hand side
\begin{eqnarray*}
  \tilde\varphi_1=\p_q^m h_q^2+\p_q^m \inner{(2gh-Q)h_p^2},\quad \tilde\varphi_2=-2\sigma \sum_{1\leq n\leq m}(\p_q^{m-n} h_{qq})\inner{\p_q^n \frac{h_p^2}{(1+h_q^2)^{\frac{3}{2}}}}.
\end{eqnarray*}
The first and third equations in \reff{oprtAB} remain unchanged. Then,
as before, our aim is to show that the corresponding estimate as  in Proposition \ref{propassu} holds, that is, there exists a  constant $\tilde L\geq 1$ such
that for any $m\geq2,$
\begin{eqnarray*}
(\tilde E_m): \quad\quad \norm{\p^m_q h}_{2,\mu}\leq
\tilde L^{m-1}(m-2)!,
\end{eqnarray*}
and to this end the main point is to show that $(\tilde E_m)$ holds under the assumption that for
any $j$ with $2\leq j\leq m-1$, the following estimate
\begin{eqnarray}\label{tildIne}
  (\tilde E_j):\quad\quad \norm{\p^j_q h}_{2,\mu}\leq
  L^{j-1}(j-2)!
\end{eqnarray}
is already valid.

Since $h\in C^{2,\mu}(\bar R)$, the coefficient
$\frac{h_p^2}{(1+h_q^2)^{\frac{3}{2}}}$ of the operator $\tilde B(h)$
is in $C^{1,\mu}(\bar R)$. Moreover by the induction assumption
\reff{tildIne},
$\tilde\varphi_1$ and $\tilde\varphi_2$ are in $C^{0,\mu}(\bar R)$. Furthermore it has been shown in \cite{HenryJmfm} that the operator $\tilde B(h)$ satisfies the complementing condition in the sense of \cite{MR0125307}. As a result, we can apply the Schauder estimate in \cite{MR0125307} to conclude
\begin{eqnarray}\label{shauder2}
  \norm{\p_q^m h}_{2,\mu}\leq \tilde {\mathcal C}\inner{\norm{\p_q^m h}_0+\sum_{i=1}^2\norm{f_i}_{0,\mu}+\norm{\tilde\varphi_1}_{0,\mu}+\norm{\tilde\varphi_2}_{0,\mu}},
\end{eqnarray}
with $\tilde {\mathcal C}$ a constant independent of $m$,   and $f_i$, $i=1,2$, defined in
\reff{def f1}-\reff{def f2}.   As for the first three terms on the right
hand side,  we can use  the similar
arguments as in Section \ref{sec analyticity} without any additional
difficulty,   to  conclude
\[
\norm{\p_q^m
  h}_0+\sum_{i=1}^2\norm{f_i}_{0,\mu}+\norm{\tilde\varphi_1}_{0,\mu}\leq
\tilde C_1 \tilde L^{m-2}(m-2)!,
\]
with  $\tilde C_1$ a constant independent of  $m$.   It remains to   estimate
$\norm{\tilde\varphi_2}_{0,\mu}$, and show that for some constant
$\tilde C_2$,
\begin{equation}\label{lain}
\norm{\tilde\varphi_2}_{0,\mu}\leq
\tilde C_2 \tilde L^{m-2}(m-2)!.
\end{equation}
To do so we need the following
lemma, whose proof is postponed to the end of this section.

\begin{lemma}\label{+stab+}

Let $C_*\geq 1$  be the constant given in Lemma \ref{stab},   and let
$k_0\in\N$ with
$k_0\geq 3$.  Suppose  $\p_q^k u\in
C^{0,\mu}(\bar R)$ for any $k\leq k_0$.  If there
exist  two constants $C_0$ and $ \tilde H$ satisfying
\begin{equation}\label{biglittle}
  C_0\geq C_*\inner{2\big\|(1+u^2)^{-1}\big\|_{2,\mu}+2\big\|(1+u^2)^{-3/2}\big\|_{2,\mu}+\big\|\p_q(u^2)\big\|_{2,\mu}+1}^6
\end{equation}
and
\begin{equation}\label{+biglittle}
\tilde H\geq  2 C^2_0+\big\|\p_q^3\inner{(1+u^2)^{-1}}
 \big\|_{0,\mu}+\big\|\p_q^3\inner{(1+u^2)^{-3/2}}
 \big\|_{0,\mu},
\end{equation}
such that
\begin{equation}\label{+condition3}
 \forall~ 3\leq k\leq k_0,  \quad  \|\p_q^k (u^2)\|_{0,\mu}
  \leq C_0 \tilde H^{k-2}(k-3)!,
\end{equation}
then
\begin{eqnarray}\label{++conclusion}
\forall~ 3\leq k\leq k_0,  \quad\big\|\p_q^k \inner{(1+u^2)^{-3/2}} \big\|_{0,\mu}\leq C_0^2  \tilde H^{k-2}(k-3)!.
\end{eqnarray}

\end{lemma}

 We now use the above lemma to prove \reff{lain}. Observe
\begin{eqnarray*}
  \norm{\tilde\varphi_2}_{0,\mu}\leq 2\sigma \sum_{1\leq n\leq m}\norm{\p_q^{m-n} h_{qq}}_{0,\mu}\norm{\p_q^n (h_p^2(1+h_q^2)^{-3/2})}_{0,\mu}.
\end{eqnarray*}
Using the induction assumption \reff{tildIne}, we have
\begin{eqnarray}\label{In+hqq}
 \forall~1\leq n\leq m-2,\quad \norm{\p_q^{m-n} h_{qq}}_{0,\mu}\leq
 \norm{\p_q^{m-n} h}_{2,\mu}\leq \tilde L^{m-n-1}(m-n-2)!,
\end{eqnarray}
and
\[
   \forall~3\leq n\leq m,\quad \norm{\p_q^{n} h_q}_{0,\mu}\leq
 \norm{\p_q^{n-1} h}_{2,\mu}\leq \tilde L^{n-2}(n-3)!.
\]
This last inequality along with  Lemma \ref{stab}, with $u_1=u_2=h_q, u_3=1$,  implies
\begin{eqnarray*}
  \forall~3\leq n\leq m,\quad \norm{\p_q^n h^2_q}_{0,\mu}\leq C_*\inner{2\norm{h_q}_{3,\mu}+1}^6\tilde L^{n-2}(n-3)!\leq 
 C_0\tilde L^{n-2}(n-3)!,
\end{eqnarray*}
where in the last inequality  we choose 
\[
    C_0\geq C_*\inner{2\big\|(1+h_q^2)^{-1}\big\|_{2,\mu}+2\big\|(1+h_q^2)^{-3/2}\big\|_{2,\mu}+\big\|\p_q(h_q^2)\big\|_{2,\mu}+2\norm{h_q}_{3,\mu}+1}^6.
\]
Now choosing   $\tilde L$ in such a way that
$$\tilde L\geq 2C^2_0 +\big\|\p_q^3\inner{(1+h_q^2)^{-1}}
 \big\|_{0,\mu}+\big\|\p_q^3\inner{(1+h_q^2)^{-3/2}}
 \big\|_{0,\mu},$$
then applying the above Lemma \ref{+stab+}, with $k_0=m$, $u=h_q$
and $\tilde H=\tilde L$,   we have,
\begin{eqnarray*}
  \forall ~ 3\leq n\leq m,\quad\norm{\p_q^n \inner{(1+h^2_q)^{-3/2}}}_{0,\mu}\leq C_0^2\tilde L^{n-2}(n-3)!.
\end{eqnarray*}
This along with \reff{In+hqq} allows us to  argue as in the
proof of  \reff{step2} for $\norm{f_1}_{0,\mu}$,   to  obtain \reff{lain}.
Thus choosing $$\tilde L\geq \tilde {\mathcal C}(\tilde C_1+\tilde C_2)+2C^2_0+\big\|\p_q^3\inner{(1+h_q^2)^{-1}}
 \big\|_{0,\mu}+\big\|\p_q^3\inner{(1+h_q^2)^{-3/2}}
 \big\|_{0,\mu},$$
we
get the validity of $(\tilde E_m)$.  The proof of Theorem \ref{th3} is thus
complete.
\end{proof}

The rest is occupied  by

\begin{proof}[Proof of Lemma \ref{+stab+}]
As a preliminary step we first use  induction to prove
\begin{eqnarray}\label{+conclusion}
 \forall~3\leq k\leq k_0,\quad \big\|\p_q^k \inner{(1+u^2)^{-1}}
 \big\|_{0,\mu}\leq  \tilde H^{k-1} (k-2)!.
\end{eqnarray}
In fact
\reff{+conclusion} obviously holds when $k=3$  due to \reff{+biglittle}.
Now assuming  $k\geq 4$ and that
\begin{equation}\label{indel}
\forall~ 3\leq j \leq k-1,   \quad \norm{\p_q^j
  \inner{(1+u^2)^{{-1}}}}_{0,\mu} \leq
\tilde H^{j-1}(j-2)!,
\end{equation}
we  show that the above inequality still holds for
$k$ with $k\leq k_0$. To do so,  write
\begin{equation}\label{betatilde}
    \p_q^k \inner{(1+u^2)^{{-1}}}=  -\p_q^{k-1}
    \inner{(1+u^2)^{-1}(1+u^2)^{-1} \p_q
      (u^2)}.
\end{equation}
Next we intend to apply Lemma \ref{stab} to prove that the right-hand  side of the above equation satisfies
\begin{eqnarray}\label{finnum}
   \norm{\p_q^{k-1}
    \inner{(1+u^2)^{-1}(1+u^2)^{-1} \p_q
      (u^2)}}_{0,\mu}\leq C_0^2 \tilde H^{k-2}(k-3)!.
\end{eqnarray}
In fact  for  any $j$ with $3\leq j\leq k-1$, one has
by \reff{+condition3},
\[
\|\p_q^j \p_q(u^2)\|_{0,\mu}
  \leq C_0\tilde H^{j-1}(j-2)!,
\]
and by the induction assumption \reff{indel}
\[
  \norm{\p_q^j
   \inner{ (1+u^2)^{-1} }}_{0,\mu} \leq
\tilde H^{j-1}(j-2)!.
\]
The above two estimates allow us to use Lemma \ref{stab},
with $\ell=1$, $u_1=u_2= (1+u^2)^{-1}$ and $u_3=C_0^{-1}\p_q(u^2)$, to obtain
\begin{eqnarray*}
&&\norm{\p_q^{k-1}
    \inner{(1+u^2)^{-1}(1+u^2)^{-1} \p_q
      (u^2)}}_{0,\mu}\\
      &\leq&C_0 C_*
  \inner{2\norm{(1+u^2)^{-1}}_{2,\mu}+C_0^{-1}\norm{\p_q(u^2)}_{2,\mu}+1}^6 \tilde
  H^{k-2}(k-3)!\\
&\leq& C_0^2 \tilde H^{k-2}(k-3)!,
\end{eqnarray*}
the last inequality using \reff{biglittle}. Thus the
desired inequality \reff{finnum} follows. As a result, combining \reff{betatilde} and
  \reff{finnum}, we conclude
\begin{eqnarray*}
    \norm{\p_q^{k}
    \inner{ (1+u^2)^{{-1}}}}_{0,\mu} \leq  C_0^2 \tilde H^{k-2}(k-3)! \leq   \tilde H^{k-1}(k-2)!,
\end{eqnarray*}
where the last inequality holds because $\tilde H\geq 2C_0^2$
due to \reff{+biglittle}.
We have proven \reff{+conclusion}.

Now  we prove \reff{++conclusion}, which
obviously holds when $k=3$ in view of  \reff{+biglittle}.
Now assuming  $k\geq 4$ and that
\begin{equation}\label{+indel}
\forall~ 3\leq j \leq k-1,   \quad \norm{\p_q^j
  \inner{(1+u^2)^{-3/2}}}_{0,\mu} \leq C^2_0
\tilde H^{j-2}(j -3)!,
\end{equation}
we  show the above equality still holds for
$k$ with $k\leq k_0$.  As before,  write
\begin{equation}\label{+betatilde}
    \p_q^k \inner{(1+u^2)^{{-3/2}}}=  -\p_q^{k-1}
    \inner{\frac{3  (1+u^2)^{-3/2}}{2}(1+u^2)^{-1} \p_q
      (u^2)}.
\end{equation}
Then the estimate in \reff{+indel} for $j=k$  will hold if we can show
that
\begin{eqnarray}\label{lastfinalIn}
  \frac{3 }{2} \norm{\p_q^{k-1}
    \inner{ (1+u^2)^{-3/2}(1+u^2)^{-1} \p_q
      (u^2)}}_{0,\mu}\leq  C^2_0
\tilde H^{k-2}(k-3)!.
\end{eqnarray}
Again we next intend to apply Lemma \ref{stab} to prove the above
estimate . For any $j $ with $3\leq j\leq k-1$,  one has
by \reff{+condition3}
\[
\|\p_q^j \p_q(u^2)\|_{0,\mu}
  \leq C_0 \tilde H^{j-1}(j -2)!,
\]
and by the induction assumption \reff{+indel}
\[
  \frac{3}{2}\norm{\p_q^j
   \inner{ (1+u^2)^{-3/2} }}_{0,\mu} \leq \frac{3}{2}  C^2_0
\tilde H^{j-2}(j-3)! \leq
\tilde H^{j-1}(j-2)!,
\]
the last inequality using the fact that $\tilde H\geq 2C_0^2\geq
3C^2_0/2$ due to  \reff{+biglittle}.
The above two estimates along with \reff{+conclusion} allow us to use Lemma \ref{stab},
with $\ell=1$, $$u_1= \frac{3}{2}(1+u^2)^{-3/2}, ~~u_2= (1+u^2)^{-1}, \quad u_3=C_0^{-1}\p_q(u^2);$$
this gives
\begin{eqnarray*}
&&\frac{3 }{2} \norm{\p_q^{k-1}
    \inner{ (1+u^2)^{-3/2}(1+u^2)^{-1} \p_q
      (u^2)}}_{0,\mu}\\
&\leq&C_0 C_*\inner{\frac{3 }{2}\norm{(1+u^2)^{-3/2}}_{2,\mu}+\norm{(1+u^2)^{-1}}_{2,\mu}+\norm{\p_q
      (u^2)}_{2,\mu}+1}^6
\tilde H^{k-2}(k-3)!\\
&\leq &C_0^2 \tilde H^{k-2}(k-3)!,
\end{eqnarray*}
where the last inequality holds because of \reff{biglittle}.
 Thus the desired estimate \reff{lastfinalIn} follows.
We have proven \reff{++conclusion}, completing the
proof of Lemma \ref{+stab+}.
\end{proof}

\bigskip
\noindent{\bf Acknowledgments}  The work is supported by
NSFC(11001207,11201355) and the RFDP (20100141120064).

\begin{thebibliography}{10}

\bibitem{MR0125307}
S.~Agmon, A.~Douglis, and L.~Nirenberg.
\newblock Estimates near the boundary for solutions of elliptic partial
  differential equations satisfying general boundary conditions. {I}.
\newblock {\em Comm. Pure Appl. Math.}, 12:623--727, 1959.

\bibitem{clx2011}
Hua Chen, Wei-Xi Li  and Chao-Jiang Xu.
\newblock Gevrey regularity of subelliptic Monge-Amp\`ere equations in the plan.
\newblock {\em Advances in  Mathematics }, 228:1816--1841, 2011.


\bibitem{MR2753609}
Adrian Constantin and Joachim Escher.
\newblock Analyticity of periodic traveling free surface water waves with
  vorticity.
\newblock {\em Ann. of Math. (2)}, 173(1):559--568, 2011.

\bibitem{MR2027299}
Adrian Constantin and Walter Strauss.
\newblock Exact steady periodic water waves with vorticity.
\newblock {\em Comm. Pure Appl. Math.}, 57(4):481--527, 2004.

\bibitem{MR1814364}
David Gilbarg and Neil~S. Trudinger.
\newblock {\em Elliptic partial differential equations of second order}.
\newblock Classics in Mathematics. Springer-Verlag, Berlin, 2001.
\newblock Reprint of the 1998 edition.

\bibitem{MR2349872}
M.~D. Groves and E.~Wahl{\'e}n.
\newblock Spatial dynamics methods for solitary gravity-capillary water waves
  with an arbitrary distribution of vorticity.
\newblock {\em SIAM J. Math. Anal.}, 39(3):932--964, 2007.

\bibitem{MR2454604}
M.~D. Groves and E.~Wahl{\'e}n.
\newblock Small-amplitude {S}tokes and solitary gravity water waves with an
  arbitrary distribution of vorticity.
\newblock {\em Phys. D}, 237(10-12):1530--1538, 2008.

\bibitem{MR2763714}
David Henry.
\newblock Analyticity of the streamlines for periodic travelling free surface
  capillary-gravity water waves with vorticity.
\newblock {\em SIAM J. Math. Anal.}, 42(6):3103--3111, 2010.

\bibitem{MR2769902}
David Henry.
\newblock On the regularity of capillary water waves with vorticity.
\newblock {\em C. R. Math. Acad. Sci. Paris}, 349(3-4):171--173, 2011.

\bibitem{HenryJmfm}
David Henry.
\newblock Analyticity of the Free Surface for Periodic Travelling Capillary-Gravity Water Waves with Vorticity.
\newblock {\em Journal of Math. Fluid Mech. },  2011,  DOI: 10.1007/s00021-011-0056-z.

\bibitem{MR2215274}
Vera~Mikyoung Hur.
\newblock Global bifurcation theory of deep-water waves with vorticity.
\newblock {\em SIAM J. Math. Anal.}, 37(5):1482--1521 (electronic), 2006.

\bibitem{MR2385741}
Vera~Mikyoung Hur.
\newblock Exact solitary water waves with vorticity.
\newblock {\em Arch. Ration. Mech. Anal.}, 188(2):213--244, 2008.

\bibitem{MR2788362}
Vera~Mikyoung Hur.
\newblock Stokes waves with vorticity.
\newblock {\em J. Anal. Math.}, 113:331--386, 2011.

\bibitem{HurImrn}
Vera~Mikyoung Hur.
\newblock Analyticity of Rotational Flows Beneath Solitary Water Waves.
\newblock {\em Int Math Res Notices 2011.},  doi: 10.1093/imrn/rnr123.


\bibitem{MR531272}
D.~Kinderlehrer, L.~Nirenberg, and J.~Spruck.
\newblock Regularity in elliptic free boundary problems.
\newblock {\em J. Analyse Math.}, 34:86--119 (1979), 1978.

\bibitem{MR0049399}
Hans Lewy.
\newblock A note on harmonic functions and a hydrodynamical application.
\newblock {\em Proc. Amer. Math. Soc.}, 3:111--113, 1952.


\bibitem{MR1872073}
James Lighthill.
\newblock {\em Waves in fluids}.
\newblock Cambridge Mathematical Library. Cambridge University Press,
  Cambridge, 2001.
\newblock Reprint of the 1978 original.


\bibitem{MaImrn}
B.-V. Matioc.
\newblock Analyticity of the Streamlines for Periodic Traveling Water Waves with Bounded Vorticity,
\newblock {\em Int Math Res Notices}, 17:3858--3871, 2011.  doi: 10.1093/imrn/rnq235

\bibitem{MaQam}
B.-V. Matioc.
\newblock On the regularity of deep-water waves with general vorticity
distributions.
\newblock to appear in {\em Quarterly of Applied Mathematics}, 2011.

\bibitem{MR1249275}
Luigi Rodino.
\newblock {\em Linear partial differential operators in {G}evrey spaces}.
\newblock World Scientific Publishing Co. Inc., River Edge, NJ, 1993.

\bibitem{MR2262949}
Erik Wahl{\'e}n.
\newblock Steady periodic capillary-gravity waves with vorticity.
\newblock {\em SIAM J. Math. Anal.}, 38(3):921--943 (electronic), 2006.

\end{thebibliography}

\end{document}